\newtheorem{theorem}{Theorem}[section]
\newtheorem{lemma}[theorem]{Lemma}
\newtheorem{remark}[theorem]{Remark}
\def\mapright#1{\smash{\mathop{\longrightarrow}\limits\sp{#1}}}
\def\qedbox{\hbox{$\rlap{$\sqcap$}\sqcup$}}
\def\WW{\mathcal{W}}\def\WWW{{\tilde{\mathcal{W}}}}
\begin{document}
  \title{Geometric Realizations of para-Hermitian curvature models}
\author[M.  Brozos-V\'azquez et. al.]{M.  Brozos-V\'azquez, P. Gilkey, S. Nik\v cevi\'c, and  R.
V\'{a}zquez-Lorenzo}
\address{MB: Department of Mathematics, University of A Coru\~na, Spain\\
E-mail: mbrozos@udc.es}
\address{PG: Mathematics Department, University of Oregon\\
   Eugene OR 97403 USA\\
   E-mail: gilkey@uoregon.edu}
\address{SN: Mathematical Institute, Sanu,
Knez Mihailova 35, p.p. 367\\
11001 Belgrade,
Serbia\\
E-mail: stanan@mi.sanu.ac.rs}
\address{RV: Department of Geometry and Topology, Faculty of Mathematics,
University of Santiago de Compostela, Santiago de Compostela,
Spain\\
E-mail: ravazlor@edu.xunta.es}
\begin{abstract} We show that a para-Hermitian algebraic curvature model satisfies the para-Gray identity if
and only if it is geometrically realizable by a para-Hermitian manifold. This requires extending the Tricerri-Vanhecke curvature decomposition to the
para-Hermitian setting. Additionally, the geometric realization can be chosen to have constant scalar curvature and constant $\star$-scalar curvature.
\end{abstract}
\keywords{Gray identity, geometric realizability, para-Hermitian manifold, scalar curvature,
$\star$-scalar curvature, Tricerri-Vanhecke curvature decomposition.\\ {\it Mathematics Subject Classification
2000:} 53B20}
\maketitle

\centerline{This paper is dedicated to the memory of \bf Professor Katsumi Nomizu}

\section{Introduction}

\subsection{Hermitian geometry}
Let $g$ be a Riemannian metric on a smooth manifold $M$ of dimension $2n$. Let $\mathcal{J}$ give $(M,g)$ an {\it almost Hermitian} structure. This means that
$\mathcal{J}$ is an almost complex structure on the tangent bundle which is compatible with $g$, i.e.
$\mathcal{J}^2=-\operatorname{id}$ and $\mathcal{J}^*g=g$. We say that the {\it almost Hermitian} manifold
$$\mathcal{M}:=(M,g,\mathcal{J})$$ is  {\it Hermitian} if $\mathcal{J}$ is integrable, i.e. if the Nijenhuis tensor vanishes or, equivalently, there exist local
coordinates
$(x_1,...,x_n,y_1,...,y_n)$  centered at any given point of the manifold so that
$$\mathcal{J}\partial_{x_i}=\partial_{y_i}\quad\text{and}\quad\mathcal{J}\partial_{y_i}=-\partial_{x_i}\,.$$
We refer to \cite{KN69} for further details.

The Riemann curvature tensor
$$ R(x,y):=\nabla_x\nabla_y-\nabla_y\nabla_x-\nabla_{[x,y]}$$
of the Levi-Civita
connection
\cite{N56} satisfies:
\begin{equation}\label{eqn-1.a}
\begin{array}{l}
R(x,y,z,w)+R(y,z,x,w)+R(z,x,y,w)=0,\\
R(x,y,z,w)=-R(y,x,z,w)=R(z,w,x,y)\,.\phantom{\vrule height 12pt}
\end{array}\end{equation}
Gray \cite{gray} showed that there is
an additional identity, which is called the Gray identity, which is satisfied by the curvature tensor of any Hermitian manifold:
\begin{eqnarray}
0&=&R(x,y,z,w)+R(Jx,Jy,Jz,Jw)-R(Jx,Jy,z,w)\nonumber\\
&-&R(Jx,y,Jz,w)-R(Jx,y,z,Jw)-R(x,Jy,Jz,w)\label{eqn-1.b}\\
&-&R(x,Jy,z,Jw)-R(x,y,Jz,Jw)\,.\nonumber
\end{eqnarray}

All  universal curvature symmetries for Hermitian manifolds are generated by the relations of Equations (\ref{eqn-1.a}) and
(\ref{eqn-1.b}). By contrast, there are no additional symmetries beyond those of Equation (\ref{eqn-1.a}) in the
almost Hermitian context. One can make this
statement precise as follows. Let $\langle\cdot,\cdot\rangle$ be a positive definite inner product on a real vector space $V$ of
dimension
$2n$. Let $J$ be a Hermitian complex structure on $V$; $J^2=-\operatorname{id}$ and
$J^*\langle\cdot,\cdot\rangle=\langle\cdot,\cdot\rangle$. Let
$A\in\otimes^4V^*$ be an {\it algebraic curvature tensor}, i.e. $A$ satisfies the symmetries of Equation (\ref{eqn-1.a}). Let
$$\mathfrak{C}:=(V,\langle\cdot,\cdot\rangle,J,A)$$
be the associated {\it Hermitian curvature model}.
We say that $\mathfrak{C}$ is geometrically realized by an almost Hermitian manifold $\mathcal{M}=(M,g,\mathcal{J})$ if there is an isomorphism $\phi:V\rightarrow
T_PM$ for some
$P\in M$ so that
$\phi^*g_P=\langle\cdot,\cdot\rangle$, $\phi^*\mathcal{J}_P=J$, and $\phi^*R_P=A$. We refer to \cite{GBKNW08,BGKS08} for the proof of the following result:
\begin{theorem}\label{thm-1.1}
Let $\mathfrak{C}$ be a Hermitian curvature model.
\begin{enumerate}
\item $\mathfrak{C}$ is always geometrically realized by an almost Hermitian manifold.
\item $\mathfrak{C}$ is geometrically realized by a Hermitian manifold if and only if $\mathfrak{C}$ satisfies {\rm Equation (\ref{eqn-1.b})}.
\end{enumerate}
\end{theorem}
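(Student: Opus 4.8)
The plan is to handle the two assertions separately: (1) by an explicit local construction, and (2) by reducing the ``only if'' direction to Gray's theorem and the ``if'' direction to a surjectivity statement about an equivariant linear map, which is where the Tricerri--Vanhecke machinery enters.

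For (1), fix a basis $\{e_i\}$ of $V$ in which $\langle\cdot,\cdot\rangle$ is the standard inner product and $J$ is the standard complex structure $J_0$; let $\{x^i\}$ be the dual linear coordinates, take $M$ to be a neighbourhood of $0$ in $V$, and set $P=0$, $\phi=\operatorname{id}$. Realize $A$ by the classical ansatz $g_{ij}(x):=\langle e_i,e_j\rangle+\tfrac13\sum_{k,l}A(e_i,e_k,e_j,e_l)\,x^kx^l$; then $g(0)=\langle\cdot,\cdot\rangle$, $dg(0)=0$, and since the Christoffel symbols vanish at $0$ a direct computation gives $R_g(0)=A$. To obtain a compatible almost complex structure, write $g_x(u,v)=\langle G(x)u,v\rangle$ with $G$ symmetric, positive definite near $0$, and $G(0)=\operatorname{id}$, and put $\mathcal J(x):=G(x)^{-1/2}J\,G(x)^{1/2}$. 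This is smooth near $0$, $\mathcal J(0)=J$, $\mathcal J^2=G^{-1/2}J^2G^{1/2}=-\operatorname{id}$, and, using $J^{t}=-J$ and the symmetry of $G^{\pm1/2}$, one checks $\mathcal J^{t}G\mathcal J=G$, i.e. $\mathcal J^{*}g=g$. Hence $(M,g,\mathcal J)$ is almost Hermitian and realizes $\mathfrak C$.

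For (2), necessity is immediate: if a Hermitian $\mathcal M$ realizes $\mathfrak C$ via $\phi$, then by Gray's theorem its curvature satisfies Equation (\ref{eqn-1.b}) at $P$, and since $\phi$ intertwines $(g_P,\mathcal J_P)$ with $(\langle\cdot,\cdot\rangle,J)$, pulling back shows $A$ satisfies Equation (\ref{eqn-1.b}). The substance is the converse. Since $\mathcal J$ is integrable exactly when there are holomorphic coordinates in which $\mathcal J=J_0$ is constant, a germ of Hermitian manifold realizing $\mathfrak C$ is precisely a germ at $0\in V$ of a $J_0$-Hermitian metric $g$ with $g(0)=\langle\cdot,\cdot\rangle$. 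Restricting to polynomial metrics $g=\langle\cdot,\cdot\rangle+h$ with $h$ a homogeneous quadratic, $J_0$-invariant symmetric $2$-tensor field, we have $g(0)=\langle\cdot,\cdot\rangle$ and $dg(0)=0$, so $R_g(0)$ is an \emph{exact} (no error term) and \emph{linear} function $\Xi(h)$ of $h$, and $\Xi$ is a $U(n)$-equivariant map from the space $Q$ of such $h$ into the space $\mathfrak a(V)$ of algebraic curvature tensors. By the necessity just proved, $\operatorname{image}(\Xi)$ lies in the submodule $\mathfrak G\subset\mathfrak a(V)$ cut out by Equation (\ref{eqn-1.b}), and $\mathfrak G$ is a $U(n)$-submodule because the Gray identity is a $U(n)$-equivariant condition. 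It therefore suffices to prove $\operatorname{image}(\Xi)=\mathfrak G$: then any $A\in\mathfrak G$ equals $\Xi(h)$ for some $h\in Q$, and $g=\langle\cdot,\cdot\rangle+h$ with the constant complex structure $J_0$ realizes $\mathfrak C$ by a Hermitian manifold.

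The equality $\operatorname{image}(\Xi)=\mathfrak G$ is the heart of the matter and is where I expect the main difficulty. The plan is: (i) use the Tricerri--Vanhecke decomposition of $\mathfrak a(V)$ into irreducible $U(n)$-modules and identify which summands constitute $\mathfrak G$; (ii) compute $\Xi$ on well-chosen $J_0$-Hermitian quadratic perturbations (for instance the real parts of $a_{i\bar j,p\bar q}z^p\bar z^q$, $b_{i\bar j,pq}z^pz^q$ and $\bar b_{i\bar j,\bar p\bar q}\bar z^p\bar z^q$ corrections to $g_{i\bar j}=\delta_{ij}$) and check that the component of $\Xi(h)$ in each summand of $\mathfrak G$ is nonzero for a suitable $h$; by Schur's lemma and equivariance this forces $\Xi$ onto $\mathfrak G$. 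Finally, for the refinement to constant scalar curvature and constant $\star$-scalar curvature, one carries the whole construction out inside a class of Hermitian metrics compatible with the integrable $\mathcal J$ --- e.g. suitable warped-product or plane-wave-type metrics --- for which $\tau$ and $\tau^{\star}$ are forced to be locally constant while the $2$-jet at $P$ remains free. The $U(n)$-module bookkeeping in (i)--(ii), together with its extension to the para-Hermitian signature announced in the abstract, is the bulk of the work.
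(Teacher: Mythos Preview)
The paper does not prove Theorem~\ref{thm-1.1} in the text; it simply cites \cite{GBKNW08,BGKS08}. Your plan for part~(2), however, is precisely the strategy the paper implements for the para-Hermitian analogue Theorem~\ref{thm-1.2}(2): perturb the flat metric by homogeneous quadratic $J$-compatible terms (your space $Q$ is the Hermitian counterpart of the paper's $S^2_-(\tilde V^*,\tilde J)\otimes S^2(\tilde V^*)$), obtain a $\mathcal U$-equivariant linear map $\Xi$ into the Gray submodule $\mathfrak G$, identify $\mathfrak G=\mathcal W_7^\perp$ via the Tricerri--Vanhecke decomposition, and verify surjectivity summand by summand with explicit examples. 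Your argument for (1) via $\mathcal J=G^{-1/2}JG^{1/2}$ is a correct alternative to perturbing inside the $J$-invariant subspace from the outset.

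One point to tighten in your step~(ii): in the decomposition the summands $\mathcal W_1\cong\mathcal W_4$ and (for $2n\ge 6$) $\mathcal W_2\cong\mathcal W_5$ occur with multiplicity two, so ``nonzero component in each summand'' together with Schur's lemma does not by itself yield surjectivity onto $\mathcal W_1\oplus\mathcal W_4$ or $\mathcal W_2\oplus\mathcal W_5$. You must exhibit two perturbations whose images are linearly independent in each isotypic piece; the paper does this (in the para-setting, Lemma~\ref{lem-5.2}) by checking that $\tau\oplus\tau^\star$ and $\rho_{0,-,S}\oplus\rho^\star_{0,-,S}$ are \emph{jointly} surjective, and you will need the corresponding two-parameter check with $\tau\oplus\tau^\star$ and $\rho_{0,+,S}\oplus\rho^\star_{0,+,S}$.
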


There are analogous questions in the affine setting.  For example, if $\nabla $ is both holomorphic and affine Kaehler, then $R=0$ and $\nabla $ is locally flat
\cite{NP}.

\subsection{Para-Hermitian geometry} Let $(\tilde M,\tilde g)$ be a pseudo-Riemannian manifold of dimension $2n$. Let
$\tilde{\mathcal{J}}$ give
$(\tilde M,\tilde g)$ an {\it almost para-Hermitian} structure;
$\tilde{\mathcal{J}}^2=\operatorname{id}$ and
$\tilde{\mathcal{J}}^*\tilde g=-\tilde g$. In this setting, necessarily $\tilde g$
has neutral signature $(n,n)$. The {\it almost para-Hermitian} manifold
$$\tilde{\mathcal{M}}:=(\tilde M,\tilde g,\tilde{\mathcal J})$$
is said to be {\it para-Hermitian} if $\tilde{\mathcal{J}}$ is integrable, i.e.  if the Nijenhuis tensor
$N_{\tilde{\mathcal{J}}}$ vanishes (see, for instance, \cite{CMMS03}), where
$$N_{\tilde{\mathcal{J}}}(x,y):=[x,y]-\tilde{\mathcal{J}}[\tilde{\mathcal{J}}x,y]
-\tilde{\mathcal{J}}[x,\tilde{\mathcal{J}}y]+[\tilde{\mathcal{J}}x,\tilde{\mathcal{J}}y].$$
Equivalently, there exist local coordinates $(x_1,...,x_n,y_1,...,y_n)$
centered at any given point of $\tilde M$ so  that
$$\tilde{\mathcal{J}}\partial_{x_i}=\partial_{y_i}\quad\text{and}\quad\tilde{\mathcal{J}}\partial_{y_i}=\partial_{x_i}\,.$$
In the algebraic setting, let $\widetilde{\langle\cdot,\cdot\rangle}$ be a neutral signature inner product on a finite dimensional vector space
$\tilde V$. Let $\tilde J$ be a {\it para-Hermitian structure} on $(\tilde V,\widetilde{\langle\cdot,\cdot\rangle})$, i.e.
$\tilde J^2=\operatorname{id}$ and $\tilde J^*\widetilde{\langle\cdot,\cdot\rangle}=-\widetilde{\langle\cdot,\cdot\rangle}$. If $\tilde
A\in\otimes^4\tilde V^*$ is an algebraic curvature tensor, let
$$\tilde{\mathfrak{C}}:=(\tilde V,\widetilde{\langle\cdot,\cdot\rangle},\tilde J,\tilde A)$$
 be the corresponding {\it para-Hermitian curvature model}. We change the signs in Equation (\ref{eqn-1.b}) to define a corresponding {\it para-Gray} relation
\begin{eqnarray}
0&=&\tilde A(x,y,z,w)+\tilde A(\tilde Jx,\tilde Jy,\tilde Jz,\tilde Jw)+\tilde A(\tilde Jx,\tilde Jy,z,w)\nonumber\\
&+&\tilde A(\tilde Jx,y,\tilde Jz,w)+\tilde A(\tilde Jx,y,z,\tilde Jw)+\tilde A(x,\tilde Jy,\tilde Jz,w)\label{eqn-1.c}\\
&+&\tilde A(x,\tilde Jy,z,\tilde Jw)+\tilde A(x,y,\tilde Jz,\tilde Jw)\,.\nonumber
\end{eqnarray}

Assertion (1) in the following Theorem was established in \cite{GBKNW08}; Assertion (2) is the main new result of this paper:

\begin{theorem}\label{thm-1.2}
Let $\tilde{\mathfrak{C}}$ be a para-Hermitian curvature model.
\begin{enumerate}
\item $\tilde{\mathfrak{C}}$ is always geometrically realized by an almost para-Hermitian manifold.
\item $\tilde{\mathfrak{C}}$ is geometrically realized by a para-Hermitian manifold
if and only if $\tilde{\mathfrak{C}}$ satisfies {\rm Equation (\ref{eqn-1.c})}.
\end{enumerate}\end{theorem}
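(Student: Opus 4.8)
The target is assertion~(2); assertion~(1) is quoted from \cite{GBKNW08}. The plan is to isolate the subspace of algebraic curvature tensors cut out by the para-Gray relation, to describe it through a para-analogue of the Tricerri--Vanhecke decomposition, and then to realize every tensor in it by a very concrete para-Hermitian metric. The first and easier step is the ``only if'' implication: the curvature tensor of any para-Hermitian manifold satisfies Equation~(\ref{eqn-1.c}), i.e.\ the para-Hermitian form of Gray's identity. One route is to repeat Gray's argument \cite{gray} with the structural relations $\tilde{\mathcal J}^2=\operatorname{id}$, $\tilde{\mathcal J}^*\tilde g=-\tilde g$ and $N_{\tilde{\mathcal J}}=0$ in place of their Hermitian counterparts; the extra minus signs, which originate in $\tilde J^2=\operatorname{id}$, are precisely what turns the signs of (\ref{eqn-1.b}) into those of (\ref{eqn-1.c}). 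A second route is to regard a para-Hermitian structure as the split (``$i\mapsto e$'' with $e^2=1$) version of a Hermitian one and to deduce (\ref{eqn-1.c}) from Gray's identity by bookkeeping the sign of $e^2$. The upshot is that every curvature model realizable by a para-Hermitian manifold lies in
$$\mathcal R_{pG}:=\{\,\tilde A:\ \tilde A \text{ is an algebraic curvature tensor on } \tilde V \text{ satisfying } (\ref{eqn-1.c})\,\}.$$

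Next I would decompose the space $\mathcal R$ of algebraic curvature tensors on $(\tilde V,\widetilde{\langle\cdot,\cdot\rangle},\tilde J)$ into irreducible modules under the para-unitary group $U^\star:=\{T\in GL(\tilde V):T^*\widetilde{\langle\cdot,\cdot\rangle}=\widetilde{\langle\cdot,\cdot\rangle},\ T\tilde J=\tilde JT\}$, which for $\tilde J=\operatorname{diag}(\operatorname{id}_n,-\operatorname{id}_n)$ is a copy of $GL(n,\mathbb R)$ acting on $\tilde V=\mathbb R^n\oplus\mathbb R^n$. This is the para-analogue of the Tricerri--Vanhecke decomposition and merits its own section: one writes down the $U^\star$-equivariant projections, identifies the invariant lines carried by the scalar curvature $\tau$ and the $\star$-scalar curvature $\tau^\star$, and shows that $\mathcal R_{pG}$ --- the kernel of the $U^\star$-equivariant operator given by the right side of (\ref{eqn-1.c}) --- is exactly the sum of a distinguished sub-collection of these irreducibles.

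For the converse, following the scheme of \cite{GBKNW08,BGKS08}, I would take $\tilde M$ to be a neighbourhood of $0$ in $\tilde V$ and keep $\tilde{\mathcal J}:=\tilde J$ \emph{constant}; then $N_{\tilde{\mathcal J}}\equiv 0$, so $\tilde M$ is automatically para-Hermitian as soon as the metric is compatible. Seek $\tilde g=\widetilde{\langle\cdot,\cdot\rangle}+\Theta+(\text{terms of order}\ge 3)$ with $\tilde{\mathcal J}^*\tilde g=-\tilde g$ and $\Theta$ homogeneous of degree $2$. Because the Christoffel symbols vanish at $0$, the map $\Theta\mapsto R_{\tilde g}(0)$ is linear and $U^\star$-equivariant, so its image is a submodule of $\mathcal R$; by necessity it lies in $\mathcal R_{pG}$, and one checks --- by evaluating on a short explicit list of compatible quadratic forms --- that it meets every irreducible summand of $\mathcal R_{pG}$. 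Hence the image is all of $\mathcal R_{pG}$, so every such $\tilde A$ is realized. Finally, the order $\ge 3$ terms, which do not affect $R_{\tilde g}(0)$, can be chosen so that $\tau$ and $\tau^\star$ become constant functions, since to leading order these two invariants depend affinely on the $4$-jet of $\tilde g$, which is free subject only to the linear compatibility constraint; alternatively one peels off the two invariant lines and realizes them by para-complex space forms, whose curvature is parallel.

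The hard part is the para-Tricerri--Vanhecke decomposition together with the surjectivity check for $\Theta\mapsto R_{\tilde g}(0)$. One must obtain the correct list of $U^\star$-irreducibles --- para-unitary representation theory is that of $GL(n,\mathbb R)$ rather than $U(n)$, so one loses, for instance, the complex conjugation that links certain summands in the Hermitian case --- determine precisely which of them solve the para-Gray relation, and then exhibit for each one a compatible quadratic metric whose curvature has a nonzero component there, with extra care wherever a summand occurs with multiplicity. The pervasive sign bookkeeping coming from $\tilde J^2=\operatorname{id}$ --- in the decomposition, in the para-Gray operator, and in the sample curvature computations --- is where the argument genuinely has to be redone by hand rather than imported from the Hermitian literature.
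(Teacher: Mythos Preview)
Your outline is correct and follows essentially the same overall strategy as the paper: establish the para-Gray identity as a necessary condition, obtain a para-analogue of the Tricerri--Vanhecke decomposition under the para-unitary group, linearize the realization problem via quadratic perturbations of a flat metric with constant $\tilde{\mathcal J}$, and verify surjectivity onto the para-Gray kernel by producing, for each irreducible summand (with care at the summands of multiplicity two), an explicit compatible metric whose curvature has nonzero component there.

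Two sub-steps differ in execution, and the paper's choices buy real economy. For the ``only if'' direction, the paper does not adapt Gray's argument as you propose but gives a self-contained coordinate computation (Theorem~\ref{thm-2.1}): in an integrable frame the curvature is a sum of terms of the shape $\tilde g_{bd/ac}$ and $\tilde g^{fe}\tilde g_{\cdot\cdot/\cdot}\tilde g_{\cdot\cdot/\cdot}$, and each type is shown to be annihilated by the para-Gray symmetrizer using only $\tilde J^*\tilde g=-\tilde g$. More significantly, for the para-Tricerri--Vanhecke decomposition the paper does not redo the $GL(n,\mathbb R)$ representation theory from scratch as you suggest; instead it complexifies a positive-definite Hermitian model $(V,\langle\cdot,\cdot\rangle,J)$, sets $\tilde J:=\sqrt{-1}\,J$ on a suitable real form $\tilde V\subset V_{\mathbb C}$, and reads off the para-Hermitian decomposition (Theorem~\ref{thm-3.2}) directly from the classical Hermitian one (Theorem~\ref{thm-3.1}). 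This transfer principle handles in one stroke the sign bookkeeping you identify as the main burden, and it guarantees that the para-irreducibles and their multiplicities match those in the Hermitian case (with $S^2_{\pm}\leftrightarrow S^2_{\mp}$, $\Lambda^2_{\pm}\leftrightarrow\Lambda^2_{\mp}$). Your direct route would also succeed but is more laborious; the paper's complexification shortcut is worth knowing.
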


\begin{remark}\label{rmk-1.3}
\rm We make the following observations:
\begin{enumerate}
\item
The results of \cite{GBKNW08} show that the  manifolds in Theorems \ref{thm-1.1} and \ref{thm-1.2} can be
chosen to have constant scalar curvature and constant $\star$-scalar curvature.
\item The methods we will develop to establish Theorem \ref{thm-1.2}  (2) can be used to show that  Theorem
\ref{thm-1.1} holds for pseudo-Riemannian manifolds; it is not necessary to assume that the inner product
is positive definite.
\item In the Hermitian setting, let $\Omega(\cdot,\cdot):=g(\cdot,\mathcal{J}\cdot)$ be the Kaehler form;
in the para-Hermitian setting, the para-Kaehler
 form is defined similarly by setting
$\tilde\Omega(\cdot,\cdot):=\tilde g(\cdot,\tilde{\mathcal{J}}\cdot)$. The
geometric realizations can be chosen so that $d\Omega_P=0$ in the Hermitian setting or
$d\tilde{\Omega}_P=0$ in the para-Hermitian setting. Thus
requiring the Kaehler or the para-Kaehler
 identity (i.e. $d\Omega=0$ or $d\tilde\Omega=0$)  at a single point imposes no
additional curvature restrictions although, of course requiring the Kaehler identity globally yields additional
curvature restrictions.
\end{enumerate}\end{remark}

\subsection{Outline of the paper} Here is a brief outline to the paper. In Section
\ref{sect-2}, we will show that the curvature tensor of any para-Hermitian manifold satisfies Equation (\ref{eqn-1.c}) and thereby establish one implication of
Theorem
\ref{thm-1.2} (2). Rather than generalizing Gray's proof from the Hermitian to the para-Hermitian setting, we have chosen to give a direct proof which is quite
different in flavor. In Section \ref{sect-3}, we recall the Tricerri-Vanhecke
\cite{TV81} decomposition of the space of algebraic curvature tensors in the Hermitian setting and extend it to the para-Hermitian setting by complexification;
this result is perhaps of interest in its own right. In Section
\ref{sect-4}, we linearize the problem. We define a linear subspace $\mathfrak{P}$ of the space of all algebraic curvature tensors which is invariant under the
para-unitary structure group such that any element of $\mathfrak{P}$ can be realized by a para-Hermitian metric with vanishing Kaehler form at the point
in question. We complete the proof of Theorem \ref{thm-1.2} (2) in Section \ref{sect-5} by showing the elements of $\mathfrak{P}$ are precisely those algebraic
curvature tensors which satisfy the para-Gray identity given in Equation (\ref{eqn-1.c}).

\section{The para-Gray identity for para-Hermitian manifolds}\label{sect-2}
Let $\tilde J$ be a para-Hermitian structure on $(\tilde V,\widetilde{\langle\cdot,\cdot\rangle})$. Let $\{\tilde e_a\}$ be a basis for $\tilde V$. If
$\tilde T\in\otimes^4\tilde V^*$, we define the {\it para-Gray symmetrization}
\begin{eqnarray*}
\tilde{\mathcal{G}}(\tilde T)(\tilde e_a,\tilde e_b,\tilde e_c,\tilde e_d):&=&
\tilde T(\tilde e_a,\tilde e_b,\tilde e_c,\tilde e_d)
+\tilde T(\tilde J\tilde e_a, \tilde J\tilde e_b ,\tilde J\tilde e_c ,\tilde J\tilde e_d )\\
&+&\tilde T(\tilde J\tilde e_a, \tilde J\tilde e_b,\tilde e_c,\tilde e_d)
+\tilde T(\tilde J\tilde e_a, \tilde e_b,\tilde J\tilde e_c,\tilde e_d)\\
&+&\tilde T(\tilde J\tilde e_a, \tilde e_b,\tilde e_c,\tilde J\tilde e_d )
+\tilde T(\tilde e_a,\tilde J\tilde e_b ,\tilde J\tilde e_c, \tilde e_d)\\
&+&\tilde T(\tilde e_a,\tilde J\tilde e_b,\tilde e_c,\tilde J\tilde e_d)
+\tilde T(\tilde e_a,\tilde e_b,\tilde J\tilde e_c,\tilde J\tilde e_d)\,.
\end{eqnarray*}

We establish one implication of Theorem \ref{thm-1.2} (2) by showing:

\begin{theorem}\label{thm-2.1}
If $\tilde{\mathcal{M}}=(\tilde M,\tilde g,\tilde{\mathcal{J}})$ is a para-Hermitian manifold, then
$\tilde{\mathcal{G}}(\tilde R)=0$.
\end{theorem}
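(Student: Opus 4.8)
The plan is to exploit a feature peculiar to neutral signature: a para-Hermitian structure amounts to a pair of transverse, Lagrangian, integrable distributions, and I will reduce the tensorial identity $\tilde{\mathcal G}(\tilde R)=0$ to the assertion that $\tilde R$ vanishes whenever all four of its arguments are tangent to one of these two distributions. Concretely, since $\tilde{\mathcal J}$ is integrable, the coordinates $(x_1,\dots,x_n,y_1,\dots,y_n)$ recalled above satisfy $\tilde{\mathcal J}\partial_{x_i}=\partial_{y_i}$ and $\tilde{\mathcal J}\partial_{y_i}=\partial_{x_i}$; setting $u_i:=x_i+y_i$ and $v_i:=x_i-y_i$ produces coordinates in which $\tilde{\mathcal J}\partial_{u_i}=\partial_{u_i}$ and $\tilde{\mathcal J}\partial_{v_j}=-\partial_{v_j}$. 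Let $\mathcal U:=\operatorname{span}\{\partial_{u_i}\}$ and $\mathcal V:=\operatorname{span}\{\partial_{v_j}\}$. From $\tilde{\mathcal J}^*\tilde g=-\tilde g$ we get $\tilde g(\partial_{u_i},\partial_{u_j})=\tilde g(\tilde{\mathcal J}\partial_{u_i},\tilde{\mathcal J}\partial_{u_j})=-\tilde g(\partial_{u_i},\partial_{u_j})$, hence $\tilde g$ restricts to $0$ on $\mathcal U$ and likewise on $\mathcal V$; since $\tilde g$ is nondegenerate of signature $(n,n)$ and $\dim\mathcal U=\dim\mathcal V=n$, this forces $\mathcal U^{\perp}=\mathcal U$ and $\mathcal V^{\perp}=\mathcal V$, the only possibly nonzero components of $\tilde g$ being the $\tilde g(\partial_{u_i},\partial_{v_j})$.

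Next I would observe that $\mathcal U$ and $\mathcal V$ are totally geodesic. All brackets of the coordinate fields vanish and every function $\tilde g(\partial_{u_i},\partial_{u_j})$ is identically zero, so the Koszul formula gives $\tilde g(\nabla_{\partial_{u_i}}\partial_{u_j},\partial_{u_k})=0$ for all $i,j,k$; thus $\nabla_{\partial_{u_i}}\partial_{u_j}\in\mathcal U^{\perp}=\mathcal U$, and the Leibniz rule upgrades this to $\nabla_X Y\in\Gamma(\mathcal U)$ for all $X,Y\in\Gamma(\mathcal U)$. Iterating once more, $R(\partial_{u_i},\partial_{u_j})\partial_{u_k}\in\Gamma(\mathcal U)$, so
\[
\tilde R(\partial_{u_i},\partial_{u_j},\partial_{u_k},\partial_{u_l})=\tilde g\big(R(\partial_{u_i},\partial_{u_j})\partial_{u_k},\partial_{u_l}\big)=0
\]
because $\mathcal U$ is isotropic; the same argument applies verbatim to $\mathcal V$. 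Hence $\tilde R$ vanishes on $\mathcal U\otimes\mathcal U\otimes\mathcal U\otimes\mathcal U$ and on $\mathcal V\otimes\mathcal V\otimes\mathcal V\otimes\mathcal V$.

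Finally I would finish with a short linear-algebra count. By multilinearity it suffices to evaluate $\tilde{\mathcal G}(\tilde R)$ on quadruples of $\tilde{\mathcal J}$-eigenvectors; if $e_a,e_b,e_c,e_d$ have eigenvalues $\varepsilon_a,\varepsilon_b,\varepsilon_c,\varepsilon_d\in\{\pm1\}$, then each of the eight summands defining $\tilde{\mathcal G}$ equals $\tilde R(e_a,e_b,e_c,e_d)$ times the product of the $\varepsilon$'s at the slots carrying a $\tilde J$, so
\[
\tilde{\mathcal G}(\tilde R)(e_a,e_b,e_c,e_d)=\Big(1+\varepsilon_a\varepsilon_b\varepsilon_c\varepsilon_d+\sum_{\{p,q\}}\varepsilon_p\varepsilon_q\Big)\,\tilde R(e_a,e_b,e_c,e_d),
\]
the sum running over the six two-element subsets $\{p,q\}\subset\{a,b,c,d\}$. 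If $k$ denotes the number of indices with $\varepsilon=-1$, the coefficient equals $1+(-1)^k+\binom{4-k}{2}+\binom{k}{2}-k(4-k)$, which is $8$ for $k\in\{0,4\}$ and $0$ for $k\in\{1,2,3\}$. Thus $\tilde{\mathcal G}(\tilde R)(e_a,e_b,e_c,e_d)$ is nonzero only when $e_a,e_b,e_c,e_d$ all lie in $\mathcal U$ or all lie in $\mathcal V$, where it equals $8\,\tilde R(e_a,e_b,e_c,e_d)=0$ by the previous paragraph; therefore $\tilde{\mathcal G}(\tilde R)=0$.

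The steps requiring the most care are the identification $\mathcal U^{\perp}=\mathcal U$ (which is what allows ``orthogonal to $\mathcal U$'' to be promoted to ``lies in $\mathcal U$'') and the two applications of the Leibniz rule that propagate $\nabla_{\mathcal U}\mathcal U\subseteq\mathcal U$ to the second covariant derivative; the remainder is a coordinate change and a finite arithmetic check, so I do not anticipate a genuine obstacle. It is worth emphasizing that the isotropic integrable eigendistributions used here have no counterpart in the Hermitian case, which is precisely why the classical Gray identity requires the different argument of \cite{gray}.
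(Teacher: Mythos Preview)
Your argument is correct and is genuinely different from the paper's proof. The paper works in coordinates with $\tilde{\mathcal J}\partial_{u_i}=\partial_{u_{n+i}}$, expands $\tilde R_{abcd}$ explicitly as a sum of terms linear in the second derivatives of $\tilde g$ and quadratic in the first derivatives, and then applies the para-Gray symmetrization $\tilde{\mathcal G}$ term by term, using $\tilde J^*\tilde g=-\tilde g$ to watch each piece cancel; this is a direct but somewhat lengthy bookkeeping computation. Your route is more conceptual: passing to the $\pm1$ eigenbundles $\mathcal U,\mathcal V$ of $\tilde{\mathcal J}$, you observe that each is an integrable maximal isotropic distribution that is totally geodesic (via Koszul and $\mathcal U^\perp=\mathcal U$), hence $\tilde R|_{\mathcal U^{\otimes4}}=\tilde R|_{\mathcal V^{\otimes4}}=0$, and then the elementary identity $1+\varepsilon_a\varepsilon_b\varepsilon_c\varepsilon_d+\sum_{\{p,q\}}\varepsilon_p\varepsilon_q=\tfrac12\bigl[\prod(1+\varepsilon_i)+\prod(1-\varepsilon_i)\bigr]$ shows $\tilde{\mathcal G}(\tilde R)$ is supported precisely where $\tilde R$ has just been shown to vanish. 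The paper's approach has the virtue of being a single uniform calculation that never needs the eigenbundle picture; yours isolates the geometric content (totally geodesic null foliations) and makes the dependence on integrability transparent, while also explaining, as you note, why the Hermitian case of Gray cannot be handled this way.
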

\begin{proof} Introduce coordinates $(u_1,...,u_{2n})$ on $\tilde M$ so
$$\tilde{\mathcal{J}}\partial_{u_1}=\partial_{u_{n+1}},\ \dots\ { ,\ }
\tilde{\mathcal{J}}\partial_{u_n}=\partial_{u_{2n}},\
  \tilde{\mathcal{J}}\partial_{u_{n+1}}=\partial_{u_1},\ \dots\ { ,\ }
\tilde{\mathcal{J}}\partial_{u_{2n}}=\partial_{u_n}\,.$$
We shall let
indices $a,b,c, \dots$ range from $1$ to $2n$ and index the coordinate frame
$\{\xi_1,...,\xi_{2n}\}:=\{\partial_{u_1},...,\partial_{u_{2n}}\}$.
We also let indices $\alpha,\beta,\gamma, \dots$ range from $1$ to
$2n$. Let
$$\tilde g_{ab}:=\tilde g(\xi_a,\xi_b),\quad \tilde g_{\alpha\beta}:=\tilde g(\tilde J\xi_a,\tilde J\xi_b),\quad
  \tilde g_{a\beta}:=\tilde g(\xi_a,\tilde J\xi_b),\quad \tilde g_{\alpha b}:=\tilde g(\tilde J\xi_a,\xi_b)\,.$$
We have $\tilde g_{ab}=-\tilde g_{\alpha\beta}$ and $\tilde g_{a\beta}=-\tilde g_{\alpha b}$.
Let $\tilde g^{ab}$ be the inverse matrix. We adopt the {\it Einstein} convention and sum over repeated indices.
Let ``/'' denote ordinary partial
differentiation.
Let $\tilde\Gamma$ be the Christoffel symbols of the Levi-Civita connection.
We compute:
\begin{eqnarray*}
&&\tilde\Gamma_{abc}=\textstyle\frac12(\tilde g_{bc/a}+\tilde g_{ac/b}-\tilde g_{ab/c}),\qquad
  \tilde\Gamma_{ab}{}^d=\tilde g^{cd}\tilde\Gamma_{abc},\\
&&\tilde R_{abc}{}^d= \partial_{u_a}\tilde\Gamma_{bc}{}^d-
\partial_{u_b}\tilde\Gamma_{ac}{}^d+\tilde\Gamma_{ae}{}^d\tilde\Gamma_{bc}{}^e
   -\tilde\Gamma_{be}{}^d\tilde\Gamma_{ac}{}^e\,.
\end{eqnarray*}
This enables us to compute:
\medbreak\quad
$\tilde R_{abcd}=\tilde g_{df}\partial_{u_a}\tilde\Gamma_{bc}{}^f-\tilde g_{df}\partial_{u_b}\tilde\Gamma_{ac}{}^f+
    \tilde g_{df}\tilde\Gamma_{ae}{}^f\tilde\Gamma_{bc}{}^e
   -\tilde g_{df}\tilde\Gamma_{be}{}^f\tilde\Gamma_{ac}{}^e$
\smallbreak\qquad
$=\tilde\Gamma_{bcd/a}-\tilde g_{df/a}\tilde\Gamma_{bc}{}^f
-\tilde\Gamma_{acd/b}+\tilde g_{df/b}\tilde\Gamma_{ac}{}^f$
$+\tilde g^{el}\tilde\Gamma_{aed}\tilde\Gamma_{bcl}-\tilde g^{el}\tilde\Gamma_{bed}\tilde\Gamma_{acl}$
\smallbreak\qquad
$=\tilde\Gamma_{bcd/a}-\tilde g^{fl}\tilde g_{df/a}\tilde\Gamma_{bcl}-\tilde\Gamma_{acd/b}$
$+\tilde g^{fl}\tilde g_{df/b}\tilde \Gamma_{acl}$
$+\tilde g^{el}\tilde\Gamma_{aed}\tilde\Gamma_{bcl}-\tilde g^{el}\tilde\Gamma_{bed}\tilde\Gamma_{acl}$.
\medbreak\noindent
We first study the linear terms in the second
derivatives of the metric:
$$\tilde\Gamma_{bcd/a}-\tilde\Gamma_{acd/b}=\textstyle\frac12\{\tilde g_{bd/ac}+\tilde g_{ac/bd}
           -\tilde g_{bc/ad}-\tilde g_{ad/bc}\}\,.$$
We examine the role $\tilde T_{abcd}^1:=\tilde g_{bd/ac}$ plays in the para-Gray identity; the remaining 3 terms play similar roles and
the argument is similar after permuting the indices appropriately. We use the fact that $\tilde J^*\tilde g=-\tilde g$ and apply $\tilde{\mathcal{G}}$ to compute
\begin{eqnarray*}
\tilde{\mathcal{G}}(\tilde T^1)_{abcd}&=&\tilde g_{bd/ac}+\tilde g_{\beta \delta/\alpha \gamma}+\tilde g_{\beta d/\alpha c}+\tilde g_{bd/\alpha \gamma}\\
&+&\tilde g_{b\delta/\alpha c}+\tilde g_{\beta d/a\gamma}+\tilde g_{\beta \delta/ac}+\tilde g_{b\delta/a\gamma}\\
&=&\tilde g_{bd/ac}-\tilde g_{bd/\alpha \gamma}-\tilde g_{b\delta/\alpha c}+\tilde g_{bd/\alpha \gamma}
\\&+&\tilde g_{b\delta/\alpha c}-\tilde g_{b\delta/a\gamma}-\tilde g_{bd/ac}+\tilde g_{b\delta/a\gamma}\\
&=&0\,.
\end{eqnarray*}
Next we examine the terms which are quadratic in the first derivatives of the metric; there are three different kinds of terms which must be symmetrized:
$$\tilde T^2_{abcd}:=\tilde g^{fe}\tilde g_{ad/f}\tilde g_{bc/e},\quad
  \tilde T^3_{abcd}:=\tilde g^{fe}\tilde g_{af/d}\tilde g_{bc/e},\quad
  \tilde T^4_{abcd}:=\tilde g^{fe}\tilde g_{af/d}\tilde g_{be/c}\,.$$
The remaining quadratic terms arise by permuting the roles of $\{a,b,c,d\}$ in these expressions. We compute:
\medbreak\qquad
$\tilde{\mathcal{G}}(\tilde T^2)_{abcd}=\tilde g^{fe}\{\tilde g_{ad/f}\tilde g_{bc/e}+\tilde g_{\alpha \delta /f}\tilde g_{\beta \gamma /e}+
\tilde g_{\alpha d/f}\tilde g_{\beta c/e}+\tilde g_{\alpha d/f}\tilde g_{b\gamma /e}$
\smallbreak\qquad\qquad\qquad\qquad\phantom{.}$+\tilde g_{\alpha \delta /f}\tilde g_{bc/e}+\tilde g_{ad/f}\tilde g_{\beta \gamma/e}+
\tilde g_{a\delta /f}\tilde g_{\beta c/e}+\tilde g_{a\delta /f}\tilde g_{b\gamma /e}\}$
\smallbreak\qquad\qquad\qquad\phantom{.}
$=\tilde g^{fe}\{\tilde g_{ad/f}\tilde g_{bc/e}+\tilde g_{ad/f}\tilde g_{bc/e}+
\tilde g_{a\delta/f}\tilde g_{b\gamma/e}-\tilde g_{a\delta/f}\tilde g_{b\gamma /e}$
\smallbreak\qquad\qquad\qquad\qquad\phantom{a}
$-\tilde g_{ad/f}\tilde g_{bc/e}-\tilde g_{ad/f}\tilde g_{bc/e}-\tilde g_{a\delta /f}\tilde g_{b\gamma/e}+\tilde g_{a\delta /f}\tilde g_{b\gamma /e}\}=0$,
\medbreak\qquad
$\tilde{\mathcal{G}}(\tilde T^3)_{abcd}=\tilde g^{fe}\{\tilde g_{af/d}\tilde g_{bc/e}+\tilde g_{\alpha f/\delta }\tilde g_{\beta \gamma /e}
+\tilde g_{\alpha f/d}\tilde g_{\beta c/e}+\tilde g_{\alpha f/d}\tilde g_{b\gamma /e}$
\smallbreak\qquad\qquad\qquad\qquad\phantom{.}
$+\tilde g_{\alpha f/\delta }\tilde g_{bc/e}+\tilde g_{af/d}\tilde g_{\beta \gamma /e}
+\tilde g_{af/\delta}\tilde g_{\beta c/e}+\tilde g_{af/\delta }\tilde g_{b\gamma /e}\}$
\smallbreak\qquad\qquad\qquad\phantom{.}
$=\tilde g^{fe}\{\tilde g_{af/d}\tilde g_{bc/e}-\tilde g_{\alpha f/\delta }\tilde g_{bc /e}
-\tilde g_{\alpha f/d}\tilde g_{b\gamma/e}+\tilde g_{\alpha f/d}\tilde g_{b\gamma /e}$
\smallbreak\qquad\qquad\qquad\qquad\phantom{.}
$+\tilde g_{\alpha f/\delta }\tilde g_{bc/e}-\tilde g_{af/d}\tilde g_{bc/e}
-\tilde g_{af/\delta}\tilde g_{b\gamma/e}+\tilde g_{af/\delta }\tilde g_{b\gamma /e}\}=0.$
\medbreak\noindent The final term requires a bit more work.
\medbreak\qquad
$\tilde{\mathcal{G}}(\tilde T^4)_{abcd}
=\tilde g^{fe}\{\tilde g_{af/d}\tilde g_{be/c}+\tilde g_{\alpha f/\delta }\tilde g_{\beta e/\gamma }
+\tilde g_{\alpha f/d}\tilde g_{\beta e/c}+\tilde g_{\alpha f/d}\tilde g_{be/\gamma }
$\smallbreak\qquad\qquad\qquad\qquad\phantom{.}$
+\tilde g_{\alpha f/\delta}\tilde g_{be/c}+\tilde g_{af/d}\tilde g_{\beta e/\gamma }
+\tilde g_{af/\delta }\tilde g_{\beta e/c}
+\tilde g_{af/\delta }\tilde g_{be/\gamma }\}$,
\medbreak\qquad
$\tilde g^{fe}\tilde g_{af/d}\tilde g_{be/c}+\tilde g^{fe}\tilde g_{\alpha f/d}\tilde g_{\beta e/c}=
 \tilde g^{fe}\tilde g_{af/d}\tilde g_{be/c}-\tilde g^{\theta\varepsilon}\tilde g_{a\theta/d}\tilde
g_{b\varepsilon/c}=0$,
\medbreak\qquad
$\tilde g^{fe}\tilde g_{\alpha f/\delta}\tilde g_{\beta e/\gamma}+\tilde g^{fe}\tilde g_{af/\delta}\tilde g_{be/\gamma}
=-\tilde g^{\theta\varepsilon}\tilde g_{a\theta/\delta}\tilde g_{b\varepsilon/\gamma}+\tilde g^{fe}\tilde
g_{af/\delta}\tilde g_{be/\gamma}=0$,
\medbreak\qquad
$\tilde g^{fe}\tilde g_{\alpha f/d}\tilde g_{be/\gamma}+\tilde g^{fe}\tilde g_{af/d}\tilde g_{\beta e/\gamma}
=-\tilde g^{\theta\varepsilon}\tilde g_{a\theta/d}\tilde g_{\beta\varepsilon/\gamma}+\tilde g^{fe}\tilde g_{af/d}\tilde
g_{\beta e/\gamma}=0$,
\medbreak\qquad
$\tilde g^{fe}\tilde g_{\alpha f/\delta }\tilde g_{be/c}+\tilde g^{fe}\tilde g_{af/\delta}\tilde g_{\beta e/c}
=-\tilde g^{\theta\varepsilon}\tilde g_{a\theta/\delta}\tilde g_{\beta\varepsilon/c}+\tilde g^{fe}\tilde g_{af/\delta}\tilde
g_{\beta e/c}=0$.
\medbreak\noindent
This establishes the para-Gray identity for para-Hermitian manifolds.
\end{proof}

\section{The Tricerri-Vanhecke curvature decomposition}\label{sect-3}

\subsection{Hermitian models}
Let $(V,\langle\cdot,\cdot\rangle,J)$ be a Hermitian structure. Extend
the inner product to tensors of all types.  Let $\Omega(x,y):=\langle x,Jy\rangle$ be the Kaehler form and
let $\mathfrak{A}(V)\subset\otimes^4V^*$ be the space of algebraic curvature tensors on $V$.
Set
\medbreak\qquad\qquad
$S_{0,+}^2(V^*,J):=\{\theta\in S^2(V^*):J^*\theta=\theta,\theta\perp\langle\cdot,\cdot\rangle\}$,
\smallbreak\qquad\qquad
$\Lambda_{0,+}^2(V^*,J):=\{\theta\in\Lambda^2(V^*):J^*\theta=\theta,\theta\perp\Omega\}$,
\smallbreak\qquad\qquad
$S_-^2(V^*,J):=\{\theta\in S^2(V^*):J^*\theta=-\theta\}$,
\smallbreak\qquad\qquad
$\Lambda_-^2(V^*,J):=\{\theta\in\Lambda^2(V^*):J^*\theta=-\theta\}$,
\smallbreak\qquad\qquad
$\mathcal{U}:=\{U\in\operatorname{GL}_{\mathbb{R}}(V):UJ=JU\quad\text{and}\quad U^*\langle\cdot,\cdot\rangle=\langle\cdot,\cdot\rangle\}$.
\medbreak

Pullback defines a natural orthogonal action of the unitary group $\mathcal{U}$,
by the orthogonal group $O(V,\langle\cdot,\cdot\rangle)$, and by
the general linear group $\operatorname{GL}_{\mathbb{R}}(V)$  on $V^*\otimes V^*$ and on $\mathfrak{A}(V)$. As a
$\operatorname{GL}_{\mathbb{R}}(V)$
 module, there is a direct sum decomposition of
$$V^*\otimes V^*=S^2(V^*)\oplus\Lambda^2(V^*)$$
into the symmetric and the anti-symmetric $2$-tensors,
respectively; these modules are irreducible ${\operatorname{GL}_{\mathbb{R}}}(V)$ modules. ${\Lambda^2}(V^*)$ is an irreducible
$O(V,\langle\cdot,\cdot\rangle)$ module. Let
{$S_0^2(V^\ast,\langle\cdot,\cdot\rangle)$} be the subspace of trace free symmetric 
{$2$-tensors}. There is
a further irreducible orthogonal decomposition of
$$
S^2(V^*)=\langle\cdot,\cdot\rangle\cdot
\mathbb{R}\oplus S_0^2({V^*},\langle\cdot,\cdot\rangle)\,.$$
Finally,
as $\mathcal{U}$ modules, we have an orthogonal direct sum decomposition:
\begin{equation}\begin{array}{rrrrrr}\label{eqn-3.a}
V^*\otimes V^*&=\langle\cdot,\cdot\rangle\cdot\mathbb{R}&\oplus
& S_{0,+}^2(V^*,J)&\oplus &S_-^2(V^*,J)\phantom{\,.}\\
&\oplus\phantom{..A}\Omega\cdot\mathbb{R}&\oplus&\Lambda_{0,+}^2(V^*,J)
&\oplus&\Lambda_-^2(V^*,J)\,.\vphantom{\vrule height 11pt}
\end{array}\end{equation}
If $\theta\in V^*\otimes V^*$, let $\theta_{0,+,S}$, $\theta_{-,S}$, and $\theta_{-,\Lambda}$ denote the components of $\theta$
in $S_{0,+}^2(V^*,J)$, $S_-^2(V^*,J)$, and $\Lambda_-^2(V^*,J)$, respectively.

Let $\{e_i\}$ be a basis for $V$. Let $\varepsilon_{ij}:=\langle e_i,e_j\rangle$ 
and let $\varepsilon^{ij}$ be the inverse matrix. {Let $A\in \mathfrak{A}(V)$.
 Let $\tau$, $\tau^\star$,
$\rho$, and
$\rho^\star$ be the scalar curvature, the
$\star$-scalar curvature, the Ricci tensor, and the
$\star$-Ricci tensor:
\begin{equation}\label{eqn-3.b}
\begin{array}{ll}
\rho(x,y):=\varepsilon^{ij}A(e_i,x,y,e_j),&\tau:=\varepsilon^{ij}\rho(e_i,e_j),\\
\rho^\star(x,y):=\varepsilon^{ij}A(e_i,x,Jy,Je_j),&\tau^\star:=\varepsilon^{ij}\rho^\star(e_i,e_j)\,.\vphantom{\vrule height 11pt}
\end{array}\end{equation}

We refer to \cite{TV81} for the proof of the following result:

\begin{theorem}\label{thm-3.1}
Let $(V,\langle\cdot,\cdot\rangle,J)$ be a Hermitian structure. \begin{enumerate}
\item We have the following orthogonal direct sum decomposition of $\mathfrak{A}(V)$ into irreducible $\mathcal{U}$ modules:
\begin{enumerate}
\item If $2n=4$,
$\mathfrak{A}(V)=\WW_1\oplus\WW_2\oplus\WW_3\oplus\WW_4\oplus\WW_7
\oplus\WW_8\oplus\WW_9$.
\item If $2n=6$,
$\mathfrak{A}(V)=\WW_1\oplus\WW_2\oplus\WW_3\oplus\WW_4\oplus\WW_5\oplus\WW_7
\oplus\WW_8\oplus\WW_9\oplus\WW_{10}$.
\item If $2n\ge8$,
$\mathfrak{A}(V)=\WW_1\oplus\WW_2\oplus\WW_3\oplus\WW_4\oplus\WW_5\oplus\WW_6\oplus\WW_7
\oplus\WW_8\oplus\WW_9\oplus\WW_{10}$.
\end{enumerate}
We have $\WW_1\approx\WW_4$ and, if $2n\ge6$, $\WW_2\approx\WW_5$. The other $\mathcal{U}$ modules
appear with multiplicity 1.
\item We have that:
\begin{enumerate}
\item  $\tau\oplus\tau^\star:\WW_1\oplus\WW_4\mapright{\approx}\mathbb{R}\oplus\mathbb{R}$.
\item If $2n=4$, $\rho_{0,+,S}:\WW_2\mapright{\approx} S_{0,+}^2(V^*,J)$.
\item If $2n\ge6$, $\rho_{0,+,S}\oplus\rho_{0,+,S}^\star:\WW_2\oplus\WW_5\mapright{\approx} S_{0,+}^2(V^*,J)\oplus S_{0,+}^2(V^*,J)$.
\item $\WW_3
=\{A\in\mathfrak{A}(V):A(x,y,z,w)=A(Jx,Jy,z,w)\ \forall\ x,y,z,w\}\cap\ker(\rho)$.
\item If $2n\ge8$, $\WW_6=\ker(\rho\oplus\rho^\star)\cap\{A\in\mathfrak{A}(V):J^*A=A\}\cap\WW_3^\perp$.
\item $\WW_7=\{A\in\mathfrak{A}(V):A(Jx,y,z,w)=A(x,y,Jz,w)\ \forall\  x,y,z,w\}$.
\item $\rho_{-,S}:\WW_8\mapright{\approx} S_-^2(V^*,J)$.
\item $\rho_{-,\Lambda}^\star :\WW_9\mapright{\approx}\Lambda_-^2(V^*,J)$.
\item If $2n\ge6$, $\WW_{10}=\{A\in\mathfrak{A}(V):J^*A=-A\}\cap\ker(\rho\oplus\rho^\star)$.
\end{enumerate}
\end{enumerate}
\end{theorem}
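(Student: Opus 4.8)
The plan is to reduce Theorem \ref{thm-3.1} to the representation theory of the structure group $\mathcal{U}\cong U(n)$, producing the summands $\WW_i$ by combining the Ricci-type contractions of (\ref{eqn-3.b}) with the $J$-type conditions appearing in Assertion (2). The starting point is the classical fact that the symmetries (\ref{eqn-1.a}) identify $\mathfrak{A}(V)$ with the kernel of the first Bianchi map $b\colon S^2(\Lambda^2V^*)\to\Lambda^4V^*$, so it suffices to decompose both $S^2(\Lambda^2V^*)$ and $\Lambda^4V^*$ as $\mathcal{U}$-modules and to subtract. The basic building block is the $\mathcal{U}$-invariant splitting $\Lambda^2V^*=\mathbb{R}\cdot\Omega\oplus\Lambda^2_{0,+}(V^*,J)\oplus\Lambda^2_-(V^*,J)$ contained in (\ref{eqn-3.a}), whose last two summands are $\mathcal{U}$-irreducible for every $n\ge2$.

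First I would peel off the modules detected by a trace. The maps $\tau,\tau^\star,\rho,\rho^\star$ are $\mathcal{U}$-equivariant, and a direct computation with (\ref{eqn-3.a}) and the first Bianchi identity shows that $\rho$ is symmetric and takes values in $\mathbb{R}\langle\cdot,\cdot\rangle\oplus S_{0,+}^2(V^*,J)\oplus S_-^2(V^*,J)$, while the components of $\rho^\star$ along $\langle\cdot,\cdot\rangle$, along $\Omega$, in $\Lambda_-^2(V^*,J)$, and (when $2n=4$) in $S_{0,+}^2(V^*,J)$ are already expressible through $\tau$, $\tau^\star$ and $\rho_{-,S}$. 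This yields the ``lower'' modules: a two-dimensional space of model tensors built from $\langle\cdot,\cdot\rangle$ and $\Omega$ on which $\tau\oplus\tau^\star$ is an isomorphism onto $\mathbb{R}\oplus\mathbb{R}$, splitting as $\WW_1\oplus\WW_4$ with $\WW_1\cong\WW_4\cong\mathbb{R}$; subspaces on which $\rho_{0,+,S}$, resp.\ $\rho^\star_{0,+,S}$, restrict to $\mathcal{U}$-isomorphisms onto $S_{0,+}^2(V^*,J)$, namely $\WW_2$ and, for $2n\ge6$, $\WW_5$; the preimage of $S_-^2(V^*,J)$ under $\rho_{-,S}$, namely $\WW_8$; and the preimage of $\Lambda_-^2(V^*,J)$ under $\rho^\star_{-,\Lambda}$, namely $\WW_9$. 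Because the target modules are $\mathcal{U}$-irreducible, each of $\WW_2,\WW_5,\WW_8,\WW_9$ is irreducible, and we obtain the stated isomorphisms $\WW_1\cong\WW_4$ and (for $2n\ge6$) $\WW_2\cong\WW_5$.

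What remains is the ``Weyl'' space $\mathfrak{A}_0:=\mathfrak{A}(V)\cap\ker(\rho\oplus\rho^\star)$. Since $J^2=-\operatorname{id}$, the operator $(J^*A)(x,y,z,w):=A(Jx,Jy,Jz,Jw)$ is an involution of $\otimes^4V^*$, so $\mathfrak{A}_0$ splits $\mathcal{U}$-invariantly into its $\pm1$-eigenspaces $\mathfrak{A}_0^+$ and $\mathfrak{A}_0^-$. On $\mathfrak{A}_0^-$ there is nothing left to do: it is $\WW_{10}$ (empty unless $2n\ge6$). On $\mathfrak{A}_0^+$, the Kähler identity $A(Jx,Jy,z,w)=A(x,y,z,w)$ cuts out $\WW_3$ (the traceless Kähler, or Bochner-type, tensors), and the identity $A(Jx,y,z,w)=A(x,y,Jz,w)$ defining $\WW_7$ cuts out another $\mathcal{U}$-invariant piece; the orthogonal complement of $\WW_3\oplus\WW_7$ in $\mathfrak{A}_0^+$ is $\WW_6$ (empty unless $2n\ge8$). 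That these pieces are mutually orthogonal and, together with the six modules above, span $\mathfrak{A}(V)$ is a direct verification with the projectors built from $\langle\cdot,\cdot\rangle$, $\Omega$ and $J$, once the $\mathcal{U}$-decomposition of the Bianchi obstruction $\Lambda^4V^*$ is recorded.

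The main obstacle — the part that cannot be seen from the trace maps — is to prove that $\WW_3,\WW_6,\WW_7,\WW_{10}$ are $\mathcal{U}$-irreducible and to pin down exactly when they vanish. Here I would work over $\mathbb{C}$: realize $\Lambda^2_{0,+}(V^*,J)$ and $\Lambda^2_-(V^*,J)$ as explicit $U(n)$-modules, decompose the symmetric squares and the tensor product that make up $\mathfrak{A}_0$ using the $S^2$-plethysm and the Littlewood--Richardson rule, remove the $\Lambda^4V^*$ contribution (and hence its Lefschetz-primitive part), and match highest weights with $\WW_3,\WW_6,\WW_7,\WW_{10}$. The small-rank coincidences then drop out of the Young-diagram bookkeeping: the diagram for $\WW_6$ has at least four rows, so $\WW_6=0$ for $2n=4,6$; the diagrams for the second copy $\WW_5$ of $S_{0,+}^2(V^*,J)$ and for $\WW_{10}$ need $n\ge3$, so they vanish for $2n=4$. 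Finally, a dimension count in the cases $2n=4$ and $2n=6$ gives an independent check, and Schur's lemma furnishes the uniqueness of the decomposition up to the isomorphisms $\WW_1\cong\WW_4$ and $\WW_2\cong\WW_5$.
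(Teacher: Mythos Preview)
The paper does not prove Theorem \ref{thm-3.1} at all: immediately before the statement it writes ``We refer to \cite{TV81} for the proof of the following result,'' and then moves on. Theorem \ref{thm-3.1} is quoted purely as background, and the paper's own contribution is to \emph{transfer} it to the para-Hermitian setting via complexification (Theorem \ref{thm-3.2}). So there is no in-paper argument to compare your proposal against; your outline already goes far beyond what the authors attempt.

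That said, your sketch is broadly the Tricerri--Vanhecke strategy: split off the trace-detected pieces using $\rho$ and $\rho^\star$, then analyse the traceless remainder with the $J^*$-involution and highest-weight bookkeeping. Two small points are worth flagging. First, your claim that the $\Lambda_-^2$-component of $\rho^\star$ is ``already expressible through $\tau,\tau^\star,\rho_{-,S}$'' is not right as stated: $\rho^\star_{-,\Lambda}$ is precisely the invariant that detects $\WW_9$ in (2)(h), so it is an independent contraction, not a redundant one. Second, your placement of $\WW_7$ inside $\mathfrak{A}_0^+=\ker(\rho\oplus\rho^\star)\cap\{J^*A=A\}$ is correct (one checks directly that the identity $A(Jx,y,z,w)=A(x,y,Jz,w)$ forces both $J^*A=A$ and $\rho=\rho^\star=0$), which incidentally shows that the characterisation in (2)(e) as printed is slightly loose---it should read $\WW_6=\ker(\rho\oplus\rho^\star)\cap\{J^*A=A\}\cap\WW_3^\perp\cap\WW_7^\perp$, exactly matching your description of $\WW_6$ as the orthogonal complement of $\WW_3\oplus\WW_7$ in $\mathfrak{A}_0^+$. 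None of this affects the logic of the paper, which only needs the \emph{existence} of the decomposition and the detecting maps in (2), all of which are supplied by \cite{TV81}.
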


\subsection{Para-Hermitian models} Let $(\tilde V,\widetilde{\langle\cdot,\cdot\rangle},\tilde J)$
 be a para-Hermitian structure; the metric is non-degenerate on the space of algebraic curvature tensors
$\mathfrak{A}(\tilde V)$. Let $\tilde\Omega(x,y):=\widetilde{\langle x,\tilde Jy\rangle}$ be the para-Kaehler  form. We have
$$\tilde J^*\tilde\Omega=-\tilde\Omega\quad\text{and}\quad
\tilde J^*\widetilde{\langle\cdot,\cdot\rangle}=-\widetilde{\langle\cdot,\cdot\rangle}\,.$$
Set\medbreak\qquad\qquad
$S_+^2(\tilde V^*,\tilde J):=\{\theta\in S^2(\tilde V^*):\tilde J^*\theta=\theta\}$,
\smallbreak\qquad\qquad
$\Lambda_+^2(\tilde V^*,\tilde J):=\{\theta\in\Lambda^2(\tilde V^*):\tilde J^*\theta=\theta\}$,
\smallbreak\qquad\qquad
$S_{0,-}^2(\tilde V^*,\tilde J):=\{\theta\in S^2(\tilde V^*):\tilde J^*\theta=-\theta,\theta\perp\widetilde{\langle\cdot,\cdot\rangle}\}$,
\smallbreak\qquad\qquad
$\Lambda_{0,-}^2(\tilde V^*,\tilde J):=\{\theta\in\Lambda^2(\tilde V^*):\tilde J^*\theta=-\theta,\theta\perp\tilde\Omega\}$,
\smallbreak\qquad\qquad
$\tilde{\mathcal{U}}:=\{\tilde U\in\operatorname{GL}_{\mathbb{R}}(\tilde V):\tilde U\tilde J=\tilde J\tilde U\quad\text{and}\quad
\tilde U^*\widetilde{\langle\cdot,\cdot\rangle}=\widetilde{\langle\cdot,\cdot\rangle}\}$.
\medbreak\noindent
Fix a
basis $\{\tilde e_i\}$ for $\tilde V$ and let $\tilde\varepsilon_{ij}$ be the components of the inner product relative to this basis. If
$\tilde A$ is an algebraic curvature tensor, define:
$$
\begin{array}{ll}
\rho(x,y):={\tilde\varepsilon}^{ij}\tilde A({\tilde e_i},x,y,{\tilde e_j}),&\tau:={ \tilde\varepsilon}^{ij}\rho({
\tilde e_i},{\tilde e_j}),\\
\rho^\star(x,y):=-{\tilde\varepsilon}^{ij}
\tilde A({\tilde e_i},x,\tilde Jy,\tilde J{\tilde e_j}),&\tau^\star:={\tilde\varepsilon}^{ij}\rho^\star({\tilde
e_i},{\tilde e_j})\,.
\vphantom{\vrule height 11pt}\end{array}$$
 The decomposition of
Equation (\ref{eqn-3.a}) extends to this setting to become:
$$\begin{array}{rrrrrrr}
\tilde V^*\otimes\tilde V^*&=&\widetilde{\langle\cdot,\cdot\rangle}\cdot\mathbb{R}&\oplus& S_{0,-}^2(\tilde V^*,\tilde J)&\oplus& S_+^2(\tilde V^*,\tilde
J)\phantom{\,.}\\
  &\oplus&\tilde\Omega\cdot\mathbb{R}&\oplus&\Lambda_{0,-}^2(\tilde V^*,\tilde J)&\oplus&\Lambda_+^2(\tilde V^*,\tilde J)\,.
\vphantom{\vrule height 11pt}\end{array}$$
\begin{theorem}\label{thm-3.2}
Let $(\tilde V,\widetilde{\langle\cdot,\cdot\rangle},\tilde J)$ be a para-Hermitian structure. \begin{enumerate}
\item We have the following orthogonal direct sum decomposition of $\mathfrak{A}(\tilde V)$ into irreducible $\tilde{\mathcal{U}}$ modules:
\begin{enumerate}
\item If $2n=4$,
$\mathfrak{A}(\tilde V)=\WWW_1\oplus\WWW_2\oplus\WWW_3\oplus\WWW_4\oplus\WWW_7
\oplus\WWW_8\oplus\WWW_9$.
\item If $2n=6$,
$\mathfrak{A}(\tilde V)=\WWW_1\oplus\WWW_2\oplus\WWW_3\oplus\WWW_4\oplus\WWW_5\oplus\WWW_7
\oplus\WWW_8\oplus\WWW_9\oplus\WWW_{10}$.
\item If $2n\ge8$,
$\mathfrak{A}(\tilde V)=\WWW_1\oplus\WWW_2\oplus\WWW_3\oplus\WWW_4\oplus\WWW_5\oplus\WWW_6\oplus\WWW_7
\oplus\WWW_8\oplus\WWW_9\oplus\WWW_{10}$.
\end{enumerate}
We have $\WWW_1\approx\WWW_4$ and, if $2n\ge6$, $\WWW_2\approx\WWW_5$. The other $\tilde{\mathcal{U}}$ modules
appear with multiplicity 1.
\item We have that:
\begin{enumerate}
\item  $\tau\oplus\tau^\star:\WWW_1\oplus\WWW_4\mapright{\approx}\mathbb{R}\oplus\mathbb{R}$.
\item If $2n=4$, $\rho_{0,-,S}:\WWW_2\mapright{\approx}S_{0,-}^2(\tilde V^*,\tilde{J})$.
\item If $2n\ge6$, $\rho_{0,-,S}\oplus\rho_{0,-,S}^\star:\WWW_2\oplus\WWW_5\mapright{\approx}S_{0,-}^2(\tilde
V^*,\tilde{J})\oplus S_{0,-}^2(\tilde V^* ,\tilde{J})$.
\item $\WWW_3
=\{\tilde A\in\mathfrak{A}(\tilde V):\tilde A(x,y,z,w)=-\tilde A(\tilde Jx,\tilde Jy,z,w)\ \forall\
x,y,z,w\}$\newline$\phantom{AA}\cap\ker(\rho)$.
\item If $2n\ge8$, $\WWW_6=\ker(\rho\oplus\rho^\star)\cap\{\tilde A\in\mathfrak{A}(\tilde{V}):\tilde J^*\tilde A=\tilde
A\}\cap\WWW_3^\perp$.
\item $\WWW_7=\{\tilde A\in\mathfrak{A}(\tilde V):\tilde A(\tilde Jx,y,z,w)=\tilde A(x,y,\tilde Jz,w)\;\ \forall\
x,y,z,w\}$.
\item $\rho_{+,S}:\WWW_8\mapright{\approx}S_+^2(\tilde V^*,\tilde J)$.
\item $\rho_{+,\Lambda}^\star:\WWW_9\mapright{\approx}\Lambda_+^2(\tilde V^*,\tilde J)$.
\item If $2n\ge6$, $\WWW_{10}=\{\tilde A\in\mathfrak{A}(\tilde V):\tilde J^*\tilde A=-\tilde A\}\cap\ker(\rho\oplus\rho^\star)$.
\end{enumerate}
\end{enumerate}
\end{theorem}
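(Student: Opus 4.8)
The plan is to derive Theorem~\ref{thm-3.2} from Theorem~\ref{thm-3.1} by \emph{complexification}, using the fact that the para-unitary group $\tilde{\mathcal U}$ and the unitary group $\mathcal U$ share the same complexification. One first identifies $\tilde{\mathcal U}$ with $\operatorname{GL}(n,\mathbb R)$: an element commuting with $\tilde J$ preserves the two $\pm1$-eigenspaces $\tilde V_\pm$ of $\tilde J$, on each of which $\widetilde{\langle\cdot,\cdot\rangle}$ vanishes while it pairs $\tilde V_+$ with $\tilde V_-$ perfectly, so $\tilde U$ is determined by $\tilde U|_{\tilde V_+}\in\operatorname{GL}(\tilde V_+)$. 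Its complexification is $\operatorname{GL}(n,\mathbb C)$, which is likewise the complexification of $\mathcal U=U(n)$; and since $\mathfrak A(V)$ and $\mathfrak A(\tilde V)$ depend only on the dimension, one can hope to transport the entire decomposition from the Hermitian to the para-Hermitian setting.

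To do this, fix a Hermitian structure $(V,\langle\cdot,\cdot\rangle,J)$ of the same dimension $2n$ and construct a $\mathbb C$-linear isomorphism $\Phi\colon V\otimes_{\mathbb R}\mathbb C\to\tilde V\otimes_{\mathbb R}\mathbb C$ with $\Phi^*\widetilde{\langle\cdot,\cdot\rangle}=\langle\cdot,\cdot\rangle$ and $\Phi^*(i\tilde J)=J$, obtained by matching the $(\pm i)$-eigenspaces $V^{1,0},V^{0,1}$ of $J$ with the complexified $(\pm 1)$-eigenspaces of $\tilde J$ and rescaling so the complex-bilinear extensions of the metrics correspond. Then $\Phi$ intertwines $\mathcal U_{\mathbb C}$ with $\tilde{\mathcal U}_{\mathbb C}$ and so induces an isomorphism $\Phi_*\colon\mathfrak A(V)\otimes\mathbb C\to\mathfrak A(\tilde V)\otimes\mathbb C$ of $\operatorname{GL}(n,\mathbb C)$-modules. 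The reason for using $i\tilde J$ in place of $\tilde J$ is bookkeeping: each occurrence of $J$ in the characterizations of the $\WW_j$ and in the definitions of $\rho^\star$, $\tau^\star$, $\Omega$ is replaced by $i\tilde J$, so an even number of $J$'s contributes a sign $(-1)^{(\#J)/2}$ --- which is exactly the sign distinguishing the para-Hermitian conditions of Theorem~\ref{thm-3.2}(2) from the Hermitian ones of Theorem~\ref{thm-3.1}(2). For instance $A(Jx,Jy,z,w)=A(x,y,z,w)$ transports to $\tilde A(\tilde Jx,\tilde Jy,z,w)=-\tilde A(x,y,z,w)$, the parities of $S^2_{0,\pm}$, $S^2_\mp$, $\Lambda^2_{0,\pm}$, $\Lambda^2_\mp$ all flip, $\Omega$ goes to $i\tilde\Omega$, and $\rho^\star$ acquires precisely the minus sign built into its para-Hermitian definition; all these transported conditions are \emph{real}.

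I would then \emph{define} each $\WWW_j\subset\mathfrak A(\tilde V)$ by the real, $\tilde{\mathcal U}$-invariant conditions listed in Theorem~\ref{thm-3.2}(2), choosing $\WWW_1,\WWW_4$ and $\WWW_2,\WWW_5$ inside their isotypic components exactly as in the Hermitian case. The computation of the previous paragraph shows $\Phi_*$ carries $\WW_j\otimes\mathbb C$ onto $\WWW_j\otimes\mathbb C$. Complexifying Theorem~\ref{thm-3.1}(1) and applying Weyl's unitary trick ($U(n)$ is a compact real form of $\operatorname{GL}(n,\mathbb C)$), each $\WW_j\otimes\mathbb C$ is either $\operatorname{GL}(n,\mathbb C)$-irreducible or splits as a sum of two mutually conjugate irreducibles (the $\WW_j$ of complex type, such as $\WW_9\cong\Lambda^2_-$); in either case no proper nonzero submodule of $\WWW_j\otimes\mathbb C$ is stable under complex conjugation, hence $\WWW_j$ has no proper nonzero real $\tilde{\mathcal U}$-submodule, i.e.\ is irreducible. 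Therefore $\mathfrak A(\tilde V)=\bigoplus_j\WWW_j$ with the same list of summands in each dimension range, the same multiplicities, and $\WWW_1\approx\WWW_4$, $\WWW_2\approx\WWW_5$; and assertion (2) is read off, map by map, from the corresponding statement for $(V,\langle\cdot,\cdot\rangle,J)$ via $\Phi$.

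The main obstacle is the passage back from $\mathbb C$ to $\mathbb R$. One must check that $\Phi_*(\WW_j\otimes\mathbb C)$ is stable under the real structure on $\mathfrak A(\tilde V)\otimes\mathbb C$ coming from $\mathfrak A(\tilde V)$, so that it genuinely is the complexification of the real module $\WWW_j$ --- this is where the sign bookkeeping of $J\mapsto i\tilde J$ must be executed carefully, the stray factor of $i$ produced by an odd number of $\tilde J$'s having to be absorbed into the definitions of $\rho^\star$ and $\tau^\star$ and into the parity conventions for $S^2_\pm$ and $\Lambda^2_\pm$. The second delicate point is orthogonality with respect to the induced inner product on $\mathfrak A(\tilde V)$, which is now indefinite but (as noted in the text) non-degenerate: distinct non-isomorphic summands are automatically orthogonal, but for the isomorphic pairs $\WWW_1\approx\WWW_4$ and $\WWW_2\approx\WWW_5$ one must choose the two complements inside their common isotypic component so that the form restricts non-degenerately to each and the two are mutually orthogonal --- exactly as is done in the Hermitian (and, for the indefinite Hermitian case, the pseudo-Riemannian) setting.
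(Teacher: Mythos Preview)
Your proposal is correct and follows essentially the same strategy as the paper's proof: complexify, identify $\tilde J$ with $\sqrt{-1}\,J$ inside $V_{\mathbb C}$, and transport the Tricerri--Vanhecke decomposition across. The paper is terser---it simply embeds $\tilde V$ as a real form of $V_{\mathbb C}$ via $\tilde e_i:=\sqrt{-1}\,e_i$, $\tilde f_i:=-f_i$, $\tilde J:=\sqrt{-1}\,J$ and asserts that the resulting bijective correspondence is ``reversible''---whereas you spell out the representation-theoretic justification (common complexification $\operatorname{GL}(n,\mathbb C)$, Weyl's unitary trick, the real-structure check, and the orthogonality issue for the isomorphic pairs in an indefinite inner product) that the paper leaves implicit.
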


\begin{proof} Let $(V,\langle\cdot,\cdot\rangle,J)$ be a Hermitian structure. We let
$V_{\mathbb{C}}:=V\otimes_{\mathbb{R}}\mathbb{C}$ be the complexification of $V$. We extend $\langle\cdot,\cdot\rangle$ to be complex bi-linear and we extend $J$
to be complex linear. We extend an element of $\mathfrak{A}(V)$ to be complex linear to define
$$
\mathfrak{A}(V_{\mathbb{C}})
  :=\{A_{\mathbb{C}}\in\otimes^4V_{\mathbb{C}}^*:\text{Equation (\ref{eqn-1.a})
holds}\}=\mathfrak{A}(V)\otimes_{\mathbb{R}}\mathbb{C}\,.
$$
Let $A\in\mathfrak{A}(V)$. If $\{\xi_i\}$ is any {$\mathbb{C}$-basis}
 for $V_{\mathbb{C}}$, then Equation
(\ref{eqn-3.b}) remains valid where $\varepsilon_{ij}:=\langle
\xi_i,\xi_j\rangle$. Let
$$
\mathcal{U}_{\mathbb{C}}:=\{U\in\operatorname{GL}_{\mathbb{C}}(V_{\mathbb{C}}):JU=UJ\text{ and }U^*\langle\cdot,\cdot\rangle=\langle\cdot,\cdot\rangle\}\,.
$$

Let $\{e_1,...,e_n,f_1,...,f_n\}$ be an orthonormal basis for $V$ where
$$Je_i=f_i\quad\text{and}\quad Jf_i=-e_i\,.$$
We let
\begin{eqnarray*}
&&\tilde e_i:=\sqrt{-1}e_i,\quad\tilde f_i:=-f_i,\quad\tilde J:=\sqrt{-1}J,\quad
  \tilde V:=\operatorname{Span}_{\mathbb{R}}\{\tilde e_i,\tilde f_i\},\\
&&\tilde{\mathcal{U}}:=\{\tilde U\in\operatorname{GL}_{\mathbb{R}}(\tilde V):\tilde U\tilde J=\tilde J\tilde U\quad\text{and}\quad \tilde
U^*\langle\cdot,\cdot\rangle=\langle\cdot,\cdot\rangle\} ={\mathcal{U}}_{\mathbb{C}}\cap\operatorname{GL}_{\mathbb{R}}(\tilde
V),\\ &&\mathfrak{A}(\tilde V)=\mathfrak{A}(V_{\mathbb{C}})\cap\otimes^4\tilde V^*\,.
\end{eqnarray*}

Since $V$ has a positive definite metric, $\tilde V$ inherits a metric $\widetilde{\langle\cdot,\cdot\rangle}$ of neutral signature $(n,n)$; the vectors $\tilde
e_i$ being timelike and the vectors $\tilde f_i$ being spacelike. Certain sign changes now manifest themselves:
$$\rho^\star{(x,y)}=-\varepsilon^{ij}A(e_i,x,\tilde Jy,\tilde Je_i),\quad\tau^\star=\varepsilon^{ij}
\rho^\star(e_i,e_j)\,.$$
In the decomposition of Equation (\ref{eqn-3.a}), we have
\begin{eqnarray*}
&&S^2_\pm(V^*,J)\otimes_{\mathbb{R}}{\mathbb{C}}=\{\theta\in\otimes^2V^*_{\mathbb{C}}:\theta(x,y)=\theta(y,x),\theta(Jx,Jy)=\pm\theta(x,y)\}\\
&=&\{\theta\in\otimes^2\tilde V^*_{\mathbb{C}}:\theta(x,y)=\theta(y,x),\theta(\tilde Jx,\tilde
Jy)=\mp\theta(x,y)\}=S^2_{\mp}(\tilde V^*,\tilde J)\otimes_{\mathbb{R}}{\mathbb{C}},\\
&&\Lambda^2_{\pm}(V^*,J)\otimes_{\mathbb{R}}\mathbb{C}=\{\theta\in\otimes^2V^*_{\mathbb{C}}:\theta(x,y)=-\theta(y,x),\theta(Jx,Jy)=\pm\theta(x,y)\}\\
&=&\{\theta\in\otimes^2\tilde V^*_{\mathbb{C}}:\theta(x,y)=-\theta(y,x),\theta(\tilde Jx,\tilde Jy)=\mp\theta(x,y)\}
=\Lambda^2_{\mp}(\tilde V^*,\tilde
J)\otimes_{\mathbb{R}}\mathbb{C}\,.
\end{eqnarray*}
This defines a bijective correspondence which derives the decomposition of Theorem \ref{thm-3.2} from that of Theorem \ref{thm-3.1}. The correspondence is
reversible and hence the modules in Theorem \ref{thm-3.2} can not be decomposed further.
\end{proof}

\begin{remark}\rm We started in the Hermitian setting to deduce a theorem in the para-Hermitian setting.
Thus the Tricerri-Vanhecke decomposition works equally well in the pseudo-Hermitian setting by changing both the inner
product and the operator $J$. Suppose given integers $p$ and $q$ with $p+q=n$. By setting
\begin{eqnarray*}
&&
\tilde e_i:=\left\{\begin{array}{rrr}\sqrt{-1}e_i&\text{if}&{1\le} i\le p\\
e_i&\text{if}&p<i\le n\end{array}\right\}\\
&&\tilde f_i:=\left\{\begin{array}{rrr}\sqrt{-1}f_i&\text{if}&{1\le} i\le p\\
f_i&\text{if}&p<i\le n\end{array}\right\}
\end{eqnarray*}
and by taking $\tilde J:=J$, we could create a pseudo-Hermitian model of signature $(2p,2q)$. The analogous correspondence
would then permit us to deduce a Tricerri-Vanhecke decomposition theorem in the pseudo-Hermitian signature as well.
\end{remark}

\section{Linearizing the problem}\label{sect-4}
We fix a para-Hermitian structure $(\tilde V,\widetilde{\langle\cdot,\cdot\rangle},\tilde J)$ hence forth.
If $\Theta\in\otimes^4\tilde V^*$, set
$$\mathcal{P}(\Theta)(x,y,z,w):=\Theta(x,z,y,w)+\Theta(y,w,x,z)-\Theta(x,w,y,z)-\Theta(y,z,x,w)\,.$$

\begin{lemma}\label{lem-4.1}If $\Theta\in S_-^2(\tilde V^*,\tilde J)\otimes S^2(\tilde V^*)$, then $\mathcal{P}(\Theta)$ is an algebraic curvature
tensor such that the complex model
$(\tilde V,\widetilde{\langle\cdot,\cdot\rangle},\tilde J,\mathcal{P}(\Theta))$
 is geometrically realizable by a
para-Hermitian manifold.
\end{lemma}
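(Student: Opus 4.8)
The plan is to realize $\mathcal{P}(\Theta)$ as the curvature of an explicit metric on $\tilde V$ built by perturbing the flat para-Hermitian metric quadratically, with the perturbation coded by $\Theta$, and then to check that the resulting metric is again compatible with a fixed integrable para-complex structure and has vanishing para-Kaehler form at the base point. Concretely, fix coordinates $(u_1,\dots,u_{2n})$ on $\tilde V$ adapted to $\tilde J$ as in the proof of Theorem \ref{thm-2.1}, let $\tilde\varepsilon$ be the constant neutral metric, and set $\tilde g_{ab}(u) := \tilde\varepsilon_{ab} + \sum_{c,d} \Theta_{?}(\dots)\, u_c u_d$ where the coefficients are obtained from the components of $\Theta$ by a suitable index placement. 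The point of the operator $\mathcal{P}$ is precisely that it matches the second-order term in the curvature formula $\tilde R_{abcd} = \tilde\Gamma_{bcd/a}-\tilde\Gamma_{acd/b}$ from Section \ref{sect-2}: since at the origin all first derivatives of $\tilde g$ vanish, the quadratic terms drop out and $\tilde R_P$ equals $\frac12$ times an explicit linear combination of second derivatives of $\tilde g$, which is $\mathcal{P}$ applied to the Hessian data. So the first step is to verify the purely algebraic identity that $\mathcal{P}$ of a symmetrized Hessian tensor reproduces, up to normalization, the first Bianchi symmetrization, hence $\mathcal{P}(\Theta)\in\mathfrak{A}(\tilde V)$ and equals $\tilde R_P$ for the metric above.

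The second step is to arrange para-Hermitian compatibility. Here is where the hypothesis $\Theta\in S_-^2(\tilde V^*,\tilde J)\otimes S^2(\tilde V^*)$ does its work: the condition $\tilde J^*\theta = -\theta$ on the first factor is exactly what forces $\tilde J^*\tilde g = -\tilde g$ to persist for the perturbed metric if the perturbation in the ``first slot'' lies in $S^2_-$. One checks that with the first tensor factor anti-invariant under $\tilde J$, the quadratic correction to $\tilde g$ still satisfies $\tilde{\mathcal{J}}^*\tilde g = -\tilde g$ identically, where $\tilde{\mathcal{J}}$ is the constant (integrable) para-complex structure of the adapted coordinates. Integrability is automatic because $\tilde{\mathcal{J}}$ has constant coefficients in these coordinates, so the Nijenhuis tensor vanishes; and $d\tilde\Omega_P = 0$ follows since $\tilde\Omega_{ab} = \tilde g_{ac}\tilde{\mathcal{J}}^c{}_b$ has vanishing first derivatives at the origin. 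Thus $(\tilde M,\tilde g,\tilde{\mathcal{J}})$ is genuinely para-Hermitian near $P$ and realizes the model $(\tilde V,\widetilde{\langle\cdot,\cdot\rangle},\tilde J,\mathcal{P}(\Theta))$ after identifying $\tilde J$ with $\tilde{\mathcal{J}}_P$ via the coordinate frame (using $\tilde J = \sqrt{-1}J$ only at the level of the algebraic correspondence already set up in Section \ref{sect-3}).

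The third, bookkeeping, step is to confirm that the symmetry type of $\Theta$ is preserved under the operations used: that the Hessian of a quadratic function with coefficients in $S^2_-\otimes S^2$ indeed lands in the right space, and that the full symmetrization $\mathcal{P}$ does not destroy the $S^2_-$ character needed for the metric argument — i.e., that the two uses of $\Theta$ (as metric perturbation, and as the tensor whose $\mathcal{P}$-image is the target curvature) are consistent. Some care is needed with the symmetry $\mathcal{P}(\Theta)(x,y,z,w) = \Theta(x,z,y,w)+\Theta(y,w,x,z)-\Theta(x,w,y,z)-\Theta(y,z,x,w)$: one must check $\mathcal{P}(\Theta) = -\mathcal{P}(\Theta)(y,x,\cdot,\cdot)$, the pair symmetry, and the first Bianchi identity, each of which is a short permutation argument given that $\Theta$ is symmetric in its first two and last two arguments separately.

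I expect the main obstacle to be the compatibility check in the second step: ensuring that the explicit quadratic metric perturbation built from $\Theta$ simultaneously (a) has curvature at $P$ exactly equal to $\mathcal{P}(\Theta)$, not merely proportional or equal modulo lower-order junk, and (b) exactly preserves $\tilde J^*\tilde g = -\tilde g$ for the \emph{constant} $\tilde J$ — getting the index placements in the definition of $\tilde g_{ab}(u)$ right so that both hold at once. The flat-metric curvature computation is routine given Section \ref{sect-2}, and integrability and $d\tilde\Omega_P=0$ are immediate; the delicate part is the linear-algebra alignment of the $S^2_-$ condition with the quadratic ansatz. Once that ansatz is pinned down correctly, the realization statement falls out directly.
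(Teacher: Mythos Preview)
Your proposal is correct and follows essentially the same approach as the paper: the paper defines $\tilde g_{ij} := \Xi_{ij} + 2\Theta_{ijkl}u^ku^l$ on a ball about the origin in $\mathbb{R}^{2n}$ equipped with the constant para-complex structure, verifies $\tilde{\mathcal{J}}^*\tilde g = -\tilde g$ directly from the $S^2_-$ hypothesis on the first factor of $\Theta$, and then computes the curvature at the origin from the second derivatives of $\tilde g$ (the first derivatives vanishing there) to obtain exactly $\mathcal{P}(\Theta)$. The explicit index placement you flagged as the delicate point is $\tilde g_{ij}(u) = \Xi_{ij} + 2\Theta_{ijkl}u^ku^l$, with the first two indices of $\Theta$ filling the metric slot and the last two contracted against the coordinates; with this choice both your conditions (a) and (b) hold on the nose.
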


\begin{proof} Let $\{e_1,...,e_n,f_1,...,f_n\}$ be a basis for $\mathbb{R}^{2n}$. Define an inner product
$\Xi$ of signature $(n,n)$ on $\mathbb{R}^{2n}$
 whose non-zero entries are
$$\Xi(e_1,e_1)=...=\Xi(e_n,e_n)=-1\quad\text{and}\quad\Xi(f_1,f_1)=...=\Xi(f_n,f_n)=+1\,.$$
If  $v\in\mathbb{R}^{2n}$, expand
$v=x_1e_1+...+x_ne_n+y_1f_1+...+y_nf_n$ to define coordinates $(x_1,...,x_n,y_1,...,y_n)=(u_1,...,u_{2n})$. Define
$$
\tilde{\mathcal{J}}\partial_{x_1}:=\partial_{y_1},\quad...\quad,
\tilde{\mathcal{J}}\partial_{x_n}:=\partial_{y_n},\quad
\tilde{\mathcal{J}}\partial_{y_1}:=\partial_{x_1},\quad...\quad,
\tilde{\mathcal{J}}\partial_{y_n}:=\partial_{x_n}\,.
$$
Let $\Theta\in {S_-^2(\tilde V^*,\tilde J)}\otimes S^2(\tilde V^*)$.
Define
\begin{equation}\label{eqn-2.d}
\tilde g_{ij}:=\Xi_{ij}+2\Theta_{ijkl}u^ku^l\,.
\end{equation}
Since $\Theta(x,y,z,w)=-\Theta(\tilde Jx,\tilde Jy,z,w)$,
$\tilde{\mathcal{J}}^*{\tilde{g}}=-{\tilde{g}}$.
 Let $B_\epsilon$ be the Euclidean ball of
radius $\epsilon>0$ centered at the origin. Since
$\tilde g$ is non-singular at the origin, there exists $\epsilon>0$ so $\tilde g$ is non singular on $ B_\epsilon$;
let
$\tilde{\mathcal{M}}:=(B_{ \epsilon},\tilde g,\tilde{\mathcal{J}})$
  be the resulting para-Hermitian manifold. Since the first derivatives of the metric vanish at $0$,
\medbreak\quad
$R(\partial_{u_i},\partial_{u_j},\partial_{u_k},\partial_{u_l})(0)
=\textstyle\frac12\{\partial_{u_i}\partial_{u_k}\tilde g_{jl}+\partial_{u_j}\partial_{u_l}\tilde g_{ik}
-\partial_{u_i}\partial_{u_l}\tilde g_{jk}-\partial_{u_j}\partial_{u_k}\tilde g_{il}\}$
\medbreak\qquad\quad
$=\Theta_{ikjl}+\Theta_{jlik}-\Theta_{iljk}-\Theta_{jkil}=\mathcal{P}(\Theta)$.
\end{proof}

\section{The proof of Theorem \ref{thm-1.2} (2)}\label{sect-5}
Let $\WWW_G$ be the space of algebraic curvature tensors such that the para-Gray identity holds. Let
$$\mathfrak{P}:=\mathcal{P}\{S^2_-(\tilde V^*,\tilde J)\otimes S^2({\tilde V}^*)\}\,.$$
$\mathfrak{P}$ and $\WWW_G$ are linear subspaces of $\mathfrak{A}(\tilde V)$ which are invariant under the action of the para-unitary group $\tilde{\mathcal{U}}$.
The results of Section
\ref{sect-3} reduce the proof of Theorem
\ref{thm-1.2} (2) to showing
 $\mathfrak{P}=\WWW_G$.
We begin our study with the following result:

\begin{lemma}\label{lem-5.1}
 $\mathfrak{P}\subset\WWW_G\subset\WWW_7^\perp$.
\end{lemma}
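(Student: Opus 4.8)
The plan is to establish the two inclusions $\mathfrak{P}\subset\WWW_G$ and $\WWW_G\subset\WWW_7^\perp$ separately. For the first inclusion, since $\mathfrak{P}$ is spanned by tensors of the form $\mathcal{P}(\Theta)$ with $\Theta\in S^2_-(\tilde V^*,\tilde J)\otimes S^2(\tilde V^*)$, it suffices to check that $\tilde{\mathcal{G}}(\mathcal{P}(\Theta))=0$ for every such $\Theta$. By Lemma \ref{lem-4.1}, each such $\mathcal{P}(\Theta)$ is the curvature tensor at a point of an actual para-Hermitian manifold, so Theorem \ref{thm-2.1} applies directly and gives $\tilde{\mathcal{G}}(\mathcal{P}(\Theta))=0$; hence $\mathcal{P}(\Theta)\in\WWW_G$. (Alternatively, one can verify $\tilde{\mathcal{G}}\circ\mathcal{P}$ vanishes on $S^2_-\otimes S^2$ by a direct computation exploiting $\Theta(\tilde Jx,\tilde Jy,z,w)=-\Theta(x,y,z,w)$, mirroring the cancellations in the proof of Theorem \ref{thm-2.1}, but invoking the geometric realization is cleaner.)

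For the second inclusion, I would describe $\WWW_7$ explicitly using Theorem \ref{thm-3.2}: $\WWW_7=\{\tilde A\in\mathfrak{A}(\tilde V):\tilde A(\tilde Jx,y,z,w)=\tilde A(x,y,\tilde Jz,w)\ \forall\ x,y,z,w\}$. The strategy is to show that any $\tilde A\in\WWW_7$ is \emph{orthogonal} to every $\tilde A'\in\WWW_G$; equivalently, since the decomposition of Theorem \ref{thm-3.2} is orthogonal, that $\WWW_G$ has no component in $\WWW_7$. The most efficient route is to show that $\WWW_7\cap\WWW_G=0$ (so the $\WWW_7$-component of any para-Gray tensor is itself para-Gray, hence zero) and invoke $\tilde{\mathcal{U}}$-invariance. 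For $\tilde A\in\WWW_7$, the defining relation lets one slide $\tilde J$ from any slot to any other slot (picking up appropriate signs via the curvature symmetries in Equation (\ref{eqn-1.a})); applying this repeatedly to each of the eight terms of $\tilde{\mathcal{G}}(\tilde A)$ should collapse $\tilde{\mathcal{G}}(\tilde A)$ to a nonzero multiple of $\tilde A(x,y,z,w)$ itself — concretely, each term becomes $\pm\tilde A(x,y,z,w)$ with signs adding up, so $\tilde{\mathcal{G}}(\tilde A)=8\tilde A$ (or some nonzero scalar times $\tilde A$). Therefore $\tilde A\in\WWW_G$ forces $\tilde A=0$, giving $\WWW_7\cap\WWW_G=0$; combined with orthogonality of the Tricerri--Vanhecke pieces and the fact that $\WWW_G$ is $\tilde{\mathcal{U}}$-invariant (a sum of some of the $\WWW_i$), this yields $\WWW_G\subset\WWW_7^\perp$.

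The main obstacle is the bookkeeping in the second step: correctly tracking the signs when using the $\WWW_7$ relation $\tilde A(\tilde Jx,y,z,w)=\tilde A(x,y,\tilde Jz,w)$ together with the pair symmetry $\tilde A(x,y,z,w)=\tilde A(z,w,x,y)$ and antisymmetry $\tilde A(x,y,z,w)=-\tilde A(y,x,z,w)$ to move $\tilde J$'s around — one must be careful that $\tilde J^2=\operatorname{id}$ (not $-\operatorname{id}$) in the para setting, which changes signs relative to the Hermitian case and is exactly why Equation (\ref{eqn-1.c}) has the sign pattern it does. I expect that once the relation is used to bring every one of the eight summands to the form $\tilde A(x,y,z,w)$, they all come with sign $+1$, giving $\tilde{\mathcal{G}}(\tilde A)=8\tilde A$ on $\WWW_7$; this identity is the crux and, once verified, finishes the lemma.
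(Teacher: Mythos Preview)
Your proposal is correct and follows essentially the same route as the paper: the first inclusion via Lemma~\ref{lem-4.1} and Theorem~\ref{thm-2.1}, and the second by showing $\WWW_G\cap\WWW_7=\{0\}$ through the identity $\tilde{\mathcal{G}}(\tilde A)=8\tilde A$ on $\WWW_7$, then invoking $\tilde{\mathcal{U}}$-invariance of $\WWW_G$ and orthogonality of the decomposition. Your anticipated sign pattern is exactly right---on $\WWW_7$ one has $\tilde A(\tilde Jx,y,z,w)=\tilde A(x,\tilde Jy,z,w)=\tilde A(x,y,\tilde Jz,w)=\tilde A(x,y,z,\tilde Jw)$, so each of the eight summands collapses to $+\tilde A(x,y,z,w)$.
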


\begin{proof} By Lemma \ref{lem-4.1}, every element of $\mathfrak{P}$ can be geometrically realized by a para-Hermitian manifold. Theorem \ref{thm-2.1} now implies
$\mathfrak{P}\subset\WWW_G$.  We show $\WWW_G\subset\WWW_7^\perp$ by showing
$\WWW_G\cap\WWW_7=\{0\}$. Let $\tilde A\in\WWW_G\cap\WWW_7$. Since $\tilde A\in\WWW_7$, the curvature
symmetries imply additionally that
\medbreak\qquad
$\tilde A(\tilde Jx,y,z,w)=-\tilde A(\tilde Jx,y,w,z)=-\tilde A(x,y,\tilde Jw,z)=\tilde A(x,y,z,\tilde Jw)$
\medbreak\qquad\qquad$=-\tilde A(y,x,z,\tilde Jw)=-\tilde A(\tilde Jy,x,z,w)=\tilde A(x,\tilde Jy,z,w)$.
\medbreak\noindent
Since $\tilde A\in\WWW_G$, we have
\medbreak\qquad
$0=\tilde A(x,y,z,w)+\tilde A(\tilde Jx,\tilde Jy,\tilde Jz,\tilde Jw)$
\medbreak\qquad\quad
$+\tilde A(\tilde Jx,\tilde Jy,z,w)+\tilde A(x,y,\tilde Jz,\tilde Jw)+\tilde A(\tilde Jx,y,\tilde Jz,w)$
\medbreak\qquad\quad
$+\tilde A(x,\tilde Jy,z,\tilde Jw)+\tilde A(\tilde Jx,y,z,\tilde Jw)+\tilde A(x,\tilde Jy,\tilde Jz,w)$
\medbreak\qquad\quad
$=8\tilde A(x,y,z,w)$.\end{proof}

We continue our study with:

\begin{lemma}\label{lem-5.2}
\ \begin{enumerate}
\item $\tau\oplus\tau^\star:\mathfrak{P}\rightarrow\mathbb{R}\oplus\mathbb{R}\rightarrow0$. Thus
$\WWW_1\oplus\WWW_4\subset\mathfrak{P}$.
\item If $2n=4$, then $\rho_{0,-,S}:\mathfrak{P}\rightarrow S_{0,-}^2(\tilde V^*,\tilde J)\rightarrow0$. Thus
$\WWW_2\subset\mathfrak{P}$.
\item $\rho_{+,S}:\mathfrak{P}\rightarrow S_+^2(\tilde V^*,\tilde J)\rightarrow0$. Thus $\WWW_8\subset\mathfrak{P}$.
\item ${\rho_{+,\Lambda}^\star}:\mathfrak{P}\rightarrow\Lambda^2_+(\tilde V^*,\tilde J)\rightarrow0$.
Thus
$\WWW_9\subset\mathfrak{P}$.
\item If $2n\ge6$, then  $\{\rho_{0,-,S}\oplus\rho_{0,-,S}^\star\}:\mathfrak{P}\rightarrow\{S_{0,-}^2(\tilde
V^*,\tilde J)\oplus S_{0,-}^2(\tilde V^*,\tilde J)\}\rightarrow0$. Thus
$\WWW_2\oplus\WWW_5\subset\mathfrak{P}$.
\item $\mathfrak{P}\cap\WWW_3\ne\{0\}$. Thus $\WWW_3\subset\mathfrak{P}$.
\item $\mathfrak{P}\cap\WWW_{10}\ne\{0\}$. Thus $\WWW_{10}\subset\mathfrak{P}$.
\item If $2n\ge6$, then $\mathfrak{P}\cap\WWW_6\ne\{0\}$. Thus $\WWW_6\subset\mathfrak{P}$.
\end{enumerate}\end{lemma}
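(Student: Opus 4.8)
The strategy is uniform: since $\mathfrak{P}$ and each $\WWW_i$ are $\tilde{\mathcal U}$-invariant, and (by Theorem \ref{thm-3.2}) each $\WWW_i$ is an \emph{irreducible} $\tilde{\mathcal U}$-module appearing with multiplicity one (except for the pair $\WWW_1\approx\WWW_4$ and, when $2n\ge 6$, $\WWW_2\approx\WWW_5$), it suffices to exhibit, for each target module, a single element of $\mathcal P\{S^2_-(\tilde V^*,\tilde J)\otimes S^2(\tilde V^*)\}$ that maps onto that module under the relevant curvature projection. For the multiplicity-one summands $\WWW_3,\WWW_6,\WWW_8,\WWW_9,\WWW_{10}$ it is enough to produce \emph{one} nonzero element of $\mathfrak P\cap\WWW_i$; irreducibility then forces $\WWW_i\subset\mathfrak P$. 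For $\WWW_1\oplus\WWW_4$ (and $\WWW_2\oplus\WWW_5$) one must instead show the combined invariant $\tau\oplus\tau^\star$ (resp.\ $\rho_{0,-,S}\oplus\rho_{0,-,S}^\star$) is \emph{surjective} onto $\mathbb R\oplus\mathbb R$ (resp.\ onto two copies of $S^2_{0,-}$), because the common isotypic component is $2$-dimensional as a sum of irreducibles; surjectivity of that combined map pins down both summands at once by Schur.

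Concretely, I would feed into $\mathcal P$ well-chosen generators $\Theta=\theta_1\otimes\theta_2$ with $\theta_1\in S^2_-(\tilde V^*,\tilde J)$ and $\theta_2\in S^2(\tilde V^*)$, and compute the resulting curvature invariants. Natural choices: taking $\theta_1=\theta_2=\widetilde{\langle\cdot,\cdot\rangle}$ is \emph{not} allowed (the metric is not in $S^2_-$), so instead use $\theta_1\in S^2_-$ together with $\theta_2\in\{\widetilde{\langle\cdot,\cdot\rangle}\}\cdot\mathbb R\oplus S^2_{0,+}$-type pieces and also $\theta_2\in S^2_-$ itself. For (1): pairing a generic $\theta_1\in S^2_-$ with the metric, and with $\tilde\Omega$-flavored symmetric partners, should give independent nonzero values of $\tau$ and of $\tau^\star$ — one computes $\mathcal P(\Theta)$ explicitly and contracts using the formulas in Section \ref{sect-3}. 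For (3) and (4): choosing $\Theta$ so that $\mathcal P(\Theta)$ lands in the $\tilde J^*A=A$ part with nonzero $\rho_{+,S}$ (resp.\ the appropriate antisymmetric $\rho^\star_{+,\Lambda}$) isolates $\WWW_8$ (resp.\ $\WWW_9$); here taking $\theta_1\in S^2_-$, $\theta_2\in S^2_+$ plus $\theta_2=\widetilde{\langle\cdot,\cdot\rangle}$ contributions is the right input. For (2)/(5): pair $\theta_1\in S^2_{0,-}$ with the metric and with a trace-adjusted $S^2_{0,+}$-partner to make $\rho_{0,-,S}$ and, when $2n\ge6$, also $\rho^\star_{0,-,S}$ hit independent targets. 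For (6), (7), (8): one needs $\Theta$ producing curvature in $\ker(\rho)$ (and $\ker\rho^\star$) lying in the correct $\tilde J$-eigenspace: e.g.\ for $\WWW_3$ use $\Theta$ with $\mathcal P(\Theta)$ satisfying $\tilde A(\tilde Jx,\tilde Jy,z,w)=-\tilde A(x,y,z,w)$ and Ricci-flat — achievable by a suitably antisymmetrized tensor product of two elements of $S^2_-$ arranged to kill all contractions; for $\WWW_6$ and $\WWW_{10}$ similarly with the opposite $\tilde J$-parity and all $\rho,\rho^\star$ contractions vanishing, the existence of such nonzero tensors in the image being guaranteed once the dimension count shows $\mathfrak P$ is large enough not to miss an irreducible summand.

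The main obstacle is parts (6)--(8): unlike the Ricci-type summands, $\WWW_3,\WWW_6,\WWW_{10}$ are not detected by any linear contraction, so one cannot simply "read off" a surjection — one must \emph{construct by hand} an explicit $\Theta\in S^2_-\otimes S^2$ whose image $\mathcal P(\Theta)$ is verifiably nonzero, Ricci-flat and $\star$-Ricci-flat, and of the correct $\tilde J$-eigentype, then invoke irreducibility. The bookkeeping for $\mathcal P$ (four terms, each a product of two quadratic forms) combined with the four vanishing contraction conditions is delicate; the cleanest route is to build the candidate $\Theta$ inside the complexification from the known Hermitian generators of $\WW_3,\WW_6,\WW_{10}$ (via the correspondence of Theorem \ref{thm-3.2}) rather than working directly over $\mathbb R$. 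Once a single nonzero element of $\mathfrak P$ in each of these modules is exhibited, irreducibility of the $\tilde{\mathcal U}$-action closes every case.
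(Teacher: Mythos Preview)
Your overall strategy is sound and matches the paper's: exhibit explicit elements of $\mathfrak{P}$ and use the characterizations of Theorem~\ref{thm-3.2} together with irreducibility to pull in each $\WWW_i$. The paper, however, is entirely concrete where your proposal is schematic: for every assertion it writes down a specific para-Hermitian metric $\tilde g=\Xi+2\Theta_{ijkl}u^ku^l$ (as in Lemma~\ref{lem-4.1}) in coordinates, computes the handful of nonzero curvature components at the origin, and then reads off $\tau,\tau^\star,\rho,\rho^\star$ directly. No abstract tensor-product manipulations or complexification transfers are used; everything is a line-by-line curvature calculation.

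Two corrections. First, your parenthetical that ``the metric is not in $S^2_-$'' is backwards in the para-Hermitian setting: since $\tilde J^*\widetilde{\langle\cdot,\cdot\rangle}=-\widetilde{\langle\cdot,\cdot\rangle}$, the metric \emph{does} lie in $S^2_-(\tilde V^*,\tilde J)$, and indeed the paper's examples exploit perturbations of exactly this type (e.g.\ $dx_i\circ dx_i-dy_i\circ dy_i$). Second, your fallback for (6)--(8) --- ``existence guaranteed once the dimension count shows $\mathfrak P$ is large enough not to miss an irreducible summand'' --- is circular: that dimension inequality is precisely the content of the lemma. You really must produce the explicit Ricci-flat and $\star$-Ricci-flat examples by hand, which the paper does with short coordinate metrics. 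For $\WWW_6$ in particular, note the paper's shortcut: rather than landing directly in $\WWW_6$, it builds an $\tilde A\in\mathfrak P$ with $\rho=\rho^\star=0$ that visibly fails the $\WWW_3$ condition, hence has a nonzero component in $\WWW_6\oplus\WWW_7$; then Lemma~\ref{lem-5.1} ($\mathfrak P\perp\WWW_7$) forces that component into $\WWW_6$. Your complexification idea would work in principle, but this orthogonality trick is quicker.
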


\begin{proof}  As in the proof of Lemma \ref{lem-4.1}, we examine
metrics $\tilde g=\Xi+O(|u|^2)$; let $\tilde A\in\mathfrak{P}$ be the curvature tensor at the origin. Set $\tilde A^*(x,y,z,w):=\tilde A(x,y,\tilde Jz,\tilde
Jw)$. Let
$$\xi\circ\eta:=\textstyle\frac12(\xi\otimes\eta+\eta\otimes\xi)$$
denote the symmetric product. Let
$\varrho$ and
$\varepsilon$ be real constants. Consider the para-Hermitian metric:
$$\tilde g=\Xi-\varepsilon x_1^2(dx_1\circ dx_1-dy_1\circ dy_1)-\varrho x_1^2(dx_2\circ dx_2-dy_2\circ dy_2)\,.$$
The non-zero entries of $\tilde A$ are, up to the usual $\mathbb{Z}_2$ symmetries,
\begin{eqnarray*}
&&\tilde A(\partial_{x_1},\partial_{y_1},\partial_{y_1},\partial_{x_1})=-\varepsilon,\\
&&\tilde A(\partial_{x_1},\partial_{x_2},\partial_{x_2},\partial_{x_1})=\varrho,\quad
  \tilde A(\partial_{x_1},\partial_{y_2},\partial_{y_2},\partial_{x_1})=-\varrho\,.
\end{eqnarray*}
Since the $\{\partial_{x_i}\}$ are timelike and the $\{\partial_{y_i}\}$ are spacelike,
$\tau=2\varepsilon+4\varrho$ and {$\tau^\star=2\varepsilon$}
 so $\tau\oplus\tau^\star$ is a
surjective map from $\mathfrak{P}$ to $\mathbb{R}\oplus\mathbb{R}$. Thus Assertion (1) follows from Theorem \ref{thm-3.2}:
$$\WWW_1\oplus\WWW_4\subset\mathfrak{P}\,.$$

The non-zero entries in the Ricci tensor are given by:
$$\begin{array}{ll}
\rho(\partial_{x_1},\partial_{x_1})=-\varepsilon-2\varrho,&\rho(\partial_{y_1},\partial_{y_1})=\varepsilon,\\
  \rho(\partial_{x_2},\partial_{x_2})=-\varrho,&\rho(\partial_{y_2},\partial_{y_2})=\varrho\,.
\end{array}$$
We take $\varrho=-1$ and $\varepsilon=2$ to ensure $\rho$ is trace free and symmetric. We then have
$$\begin{array}{ll}
\rho_{+,S}(\partial_{x_1},\partial_{x_1})=1,&\rho_{0,-,S}(\partial_{x_1},\partial_{x_1})=-1,\\
\rho_{+,S}(\partial_{y_1},\partial_{y_1})=1,&\rho_{0,-,S}(\partial_{y_1},\partial_{y_1})=1,\\
\rho_{+,S}(\partial_{x_2},\partial_{x_2})=0,&\rho_{0,-,S}(\partial_{x_2},\partial_{x_2})=1,\\
\rho_{+,S}(\partial_{y_2},\partial_{y_2})=0,&\rho_{0,-,S}(\partial_{y_2},\partial_{y_2})=-1\,.
\end{array}$$
This shows that $\rho_{0,-,S}$ is non-zero on $\mathfrak{P}$; Assertion (2) now follows if $2n=4$ since $\WWW_5$ is not present:
$$\WWW_2\subset\mathfrak{P}\,.$$
It also shows $\rho_{+,S}$ is non-trivial on $\mathfrak{P}$ and establishes Assertion (3):
$$\WWW_8\subset\mathfrak{P}\,.$$

We clear the previous notation and consider:
$$\tilde g=\Xi-{4}\varepsilon x_1^2(-dx_1\circ dx_2+dy_1\circ dy_2)\,.$$
There is only one non-zero curvature entry
$\tilde A(\partial_{x_1},\partial_{y_1},\partial_{y_2},\partial_{x_1})=2\varepsilon$. We have:
$$\begin{array}{ll}
\tilde A^*(\partial_{x_1},\partial_{y_1},\partial_{x_2},\partial_{y_1})=2\varepsilon,&
\tilde A^*(\partial_{y_2},\partial_{x_1},\partial_{y_1},\partial_{x_1})=2\varepsilon,\\
\rho^\star(\partial_{x_1},\partial_{x_2})={2\varepsilon},&\rho^\star(\partial_{y_2},\partial_{y_1})={-2\varepsilon},\\
\rho^\star_\Lambda(\partial_{x_1},\partial_{x_2})=-\rho^\star_\Lambda(\partial_{x_2},\partial_{x_1})={\varepsilon},&
\rho^\star_\Lambda(\partial_{y_2},\partial_{y_1})=
-\rho^\star_\Lambda(\partial_{y_1},\partial_{y_2})={-\varepsilon},\\
 \rho_S^\star(\partial_{x_1},\partial_{x_2})=\rho_S^\star(\partial_{x_2},\partial_{x_1})={\varepsilon},&
\rho_S^\star(\partial_{y_1},\partial_{y_2})=\rho_S^\star(\partial_{y_2},\partial_{y_1})={-\varepsilon}\,.
\end{array}$$
This shows $\rho^\star_{+,\Lambda}=\rho^\star_\Lambda\ne0$ so $\tilde A$ has a non-trivial component in $\WWW_9$. This
completes the proof of Assertion (4):
$$\WWW_9\subset\mathfrak{P}\,.$$

Assume $2n\ge6$. We clear the
previous notation and consider:
\begin{eqnarray*}
\tilde g=\Xi-2\varrho x_1^2(-dx_1\circ dx_2+dy_1\circ dy_2)-2\varepsilon x_1^2(-dx_2\circ dx_3+dy_2\circ dy_3)\,.
\end{eqnarray*}
The non-zero curvatures now become:
\begin{eqnarray*}
&&\tilde A(\partial_{x_1},\partial_{y_1},\partial_{y_2},\partial_{x_1})=\varrho,\\
&&\tilde A(\partial_{x_1},\partial_{x_2},\partial_{x_3},\partial_{x_1})=-\varepsilon,\quad
\tilde A(\partial_{x_1},\partial_{y_2},\partial_{y_3},\partial_{x_1})=\varepsilon\,.
\end{eqnarray*}
Note that $\rho$ is always symmetric. We have
$$\begin{array}{ll}
\rho(\partial_{y_1},\partial_{y_2})=-\varrho,&\rho(\partial_{x_1},\partial_{x_2})=0,\\
\rho(\partial_{x_2},\partial_{x_3})=\varepsilon,&
\rho(\partial_{y_2},\partial_{y_3})=-\varepsilon\,.
\end{array}$$
This leads to the decomposition:
\medbreak\qquad
$\rho_{0,-,S}(\partial_{x_1},\partial_{x_2})=\phantom{a}\textstyle\frac12\varrho$,\quad
$\rho_{+,S}(\partial_{x_1},\partial_{x_2})=-\textstyle\frac12\varrho$,
\smallbreak\qquad
$\rho_{0,-,S}(\partial_{y_1},\partial_{y_2})=-\textstyle\frac12\varrho$,\quad
$\rho_{+,S}(\partial_{y_1},\partial_{y_2})=-\textstyle\frac12\varrho$,
\smallbreak\qquad
$\rho_{0,-,S}(\partial_{x_2},\partial_{x_3})=\phantom{-..}{\varepsilon}$,\quad
$\rho_{+,S}(\partial_{x_2},\partial_{x_3})=0$,
\smallbreak\qquad
$\rho_{0,-,S}(\partial_{y_2},\partial_{y_3})=\phantom{.}-\varepsilon$,\quad
$\rho_{+,S}(\partial_{y_2},\partial_{y_3})=0$.\medbreak\noindent
We have:
$$\begin{array}{ll}
\tilde A^*(\partial_{x_1},\partial_{y_1},\partial_{x_2},\partial_{y_1})=\varrho,&
\tilde A^*(\partial_{y_2},\partial_{x_1},\partial_{y_1},\partial_{x_1})=\varrho,\\
\tilde A^*(\partial_{x_1},\partial_{x_2},\partial_{y_3},\partial_{y_1})=-\varepsilon,&
\tilde A^*(\partial_{x_3},\partial_{x_1},\partial_{y_1},\partial_{y_2})=-\varepsilon,\\
\tilde A^*(\partial_{x_1},\partial_{y_2},\partial_{x_3},\partial_{y_1})=\varepsilon,&
\tilde A^*(\partial_{y_3},\partial_{x_1},\partial_{y_1},\partial_{x_2})=\varepsilon\,.
\end{array}$$
 Consequently $\rho^\star(\partial_{x_1},\partial_{x_2})={\varrho}$ and
$\rho^\star(\partial_{y_2},\partial_{y_1})={-\varrho}$. This yields:
\medbreak\qquad\qquad
$\rho_{0,-,S}^\star(\partial_{x_1},\partial_{x_2})=\phantom{-}{\textstyle\frac12\varrho}$,\quad
$\rho_{+,\Lambda}^\star(\partial_{x_1},\partial_{x_2})=\phantom{a}{\textstyle\frac12\varrho}$,
\smallbreak\qquad\qquad
$\rho_{0,-,S}^\star(\partial_{y_1},\partial_{y_2})={-\textstyle\frac12\varrho}$,\quad
$\rho_{+,\Lambda}^\star(\partial_{y_1},\partial_{y_2})={+\textstyle\frac12\varrho}$.
\medbreak\noindent
If we take $\varrho=0$ and $\varepsilon\ne0$, then $\rho_{0,-,S}\ne0$ and $\rho_{0,-,S}^\star=0$. Thus
\begin{eqnarray*}
&&\{S_{0,-}^2(\tilde V^*,\tilde J)\oplus 0\}\cap\{\rho_{0,-,S}\oplus\rho_{0,-,S}^\star\}\mathfrak{P}\ne\{0\}\quad\text{so}\\
&&\{S_{0,-}^2(\tilde V^*,\tilde J)\oplus 0\}\subset\{\rho_{0,-,S}\oplus\rho_{0,-,S}^\star\}\mathfrak{P}\,.
\end{eqnarray*}
On the other hand, if we take $\varrho\ne0$, then ${\rho_{0,-,S}^\star}\ne0$.
 Thus we have a non-zero component in the second
factor and
$$\{S_{0,-}^2(\tilde V^*,\tilde J)\oplus {S_{0,-}^2}(\tilde V^*,\tilde J)\}\subset\{\rho_{0,-,S}
\oplus\rho_{0,-,S}^\star\}\mathfrak{P}\,.$$
This establishes Assertion (5):
$$\WWW_2\oplus\WWW_5\subset\mathfrak{P}\,.$$

 To prove Assertion (6), we consider the metric
$$
\tilde g=\Xi-2\{x_1^2-y_1^2-x_2^2+y_2^2\}(-dx_1\circ dx_2+dy_1\circ dy_2)\,.
$$
The non-zero components of $\tilde A$ are then given, up to the usual $\mathbb{Z}_2$ symmetries by:
$$\begin{array}{ll}
\tilde A(\partial_{x_1},\partial_{y_1},\partial_{y_2},\partial_{x_1})=1,&\tilde A(\partial_{y_1},\partial_{x_1},\partial_{x_2},\partial_{y_1})=1,\\
\tilde A(\partial_{x_2},\partial_{y_1},\partial_{y_2},\partial_{x_2})=-1,&\tilde A(\partial_{y_2},\partial_{x_1},\partial_{x_2},\partial_{y_2})=-1\,.
\vphantom{\vrule height 11pt}\end{array}
$$
We have
$\rho=0$ and $\tilde A(\tilde Jx,\tilde Jy,z,w)=-\tilde A(x,y,z,w)$ for all $x$, $y$, $z$, and $w$. This shows $\tilde A\in\WWW_3$ and proves Assertion (6) by
showing
$$\WWW_3\subset\mathfrak{P}\,.$$

Let $2n\ge6$. We consider
$$
\tilde g=\Xi-2\{x_1^2+y_1^2\}(-dx_2\circ dx_3+dy_2\circ dy_3) \,.
$$
The non-zero curvatures are then
\begin{eqnarray*}
&&\tilde A(\partial_{x_1},\partial_{x_2},\partial_{x_3},\partial_{x_1})=-1,\quad \tilde A(\partial_{x_1},\partial_{y_2},\partial_{y_3},\partial_{x_1})=1,\\
&&\tilde A(\partial_{y_1},\partial_{x_2},\partial_{x_3},\partial_{y_1})=-1,\quad \tilde A(\partial_{y_1},\partial_{y_2},\partial_{y_3},\partial_{y_1})=1\,.
\end{eqnarray*}
We have $\rho=\rho^\star=0$. Since ${\tilde J}^*\tilde A=-\tilde A$,
 $\tilde A\in\WWW_{10}$; Assertion (7)
follows since
$$\WWW_{10}\subset\mathfrak{P}\,.$$

Let $2n\ge8$. We take
$$\tilde g=\Xi-{4}\{x_1x_2+y_1y_2\}(-dx_3\circ dx_4+dy_3\circ dy_4)\,.$$
The non-zero curvatures are
\begin{eqnarray*}
&&\tilde A(\partial_{x_1},\partial_{x_3},\partial_{x_4},\partial_{x_2})=
\tilde A(\partial_{y_1},\partial_{x_3},\partial_{x_4},\partial_{y_2})\\
&=&\tilde A(\partial_{x_1},\partial_{x_4},\partial_{x_3},\partial_{x_2})=
\tilde A(\partial_{y_1},\partial_{x_4},\partial_{x_3},\partial_{y_2})=-1,\\
&&\tilde A(\partial_{x_1},\partial_{y_3},\partial_{y_4},\partial_{x_2})=
\tilde A(\partial_{y_1},\partial_{y_3},\partial_{y_4},\partial_{y_2})\\
&=&\tilde A(\partial_{x_1},\partial_{y_4},\partial_{y_3},\partial_{x_2})=
\tilde A(\partial_{y_1},\partial_{y_4},\partial_{y_3},\partial_{y_2})=1\,.
\end{eqnarray*}
We observe that $\rho=\rho^\star=0$. Since {$\tilde A(\tilde Jx,\tilde Jy,z,w)\ne -\tilde A(x,y,z,w)$}, $\tilde
A\notin\WWW_3$. Thus $\tilde A$ has a non-zero component in
$\WWW_6\oplus\WWW_7$. As $\mathfrak{P}\perp\WWW_7$, $\tilde A$ has a non-zero component in $\WWW_6$ and
Assertion (8) follows; $\WWW_6\subset\mathfrak{P}$.
\end{proof}

\medbreak\noindent{\it Proof of Theorem \ref{thm-1.2} (2).} By Lemma \ref{lem-5.1}, we have
$\mathfrak{P}\subset\WWW_G\subset\WWW_7^\perp$. The assertion
$\WWW_7^\perp\subset\mathfrak{P}$ follows from the Tricerri-Vanhecke decomposition described in
Theorem \ref{thm-3.2} and from Lemma
\ref{lem-5.2}.\hfill\qedbox

\medbreak\noindent{\it Proof of Remark \ref{rmk-1.3}}. The construction given above yields
$\tilde{\mathcal{M}}$ with {$d\tilde\Omega_P=0$}
 realizing the given complex
curvature model $\tilde{\mathfrak{C}}$ at $P$. Imposing the para-Kaehler
identity $d\tilde\Omega\equiv0$ globally would imply that
$\tilde R\in\WWW_1\oplus\WWW_2\oplus\WWW_3$ so this is not possible in general. In
\cite{GBKNW08}, we considered a further variation
$$\tilde h:=\Xi+2\xi(dx_1\circ dx_1-dy_1\circ dy_1)+2\eta(dx_2\circ dx_2-dy_2\circ dy_2)$$
where $\{\xi,\eta\}$ are smooth functions vanishing to second order at $P$. We showed it was possible to choose $\{\xi,\eta\}$ so that the resulting metric had
constant scalar curvature and constant $\star$-scalar curvature. Since $\{\xi,\eta\}$ vanish to second order, $(\tilde M,\tilde h,\tilde{\mathcal{J}})$ realizes
$\tilde{\mathfrak{C}}$ at
$P$ as well and $d\tilde\Omega_{\xi,\eta}=0$. This establishes Remark
\ref{rmk-1.3}.
\hfill\qedbox

\section*{Acknowledgments} The research of all authors was partially supported by Project MTM2006-01432
(Spain). The research of P. Gilkey was also partially supported by  Project DGI SEJ2007-67810a (Spain) and research of S. Nik\v cevi\'c was also partially
supported by Project 144032 (Serbia).

\end{document}